\documentclass[12pt]{amsart}
\usepackage{amsmath,amssymb,a4wide,graphicx}
\usepackage{color}
\allowdisplaybreaks

\newtheorem{theorem}{Theorem}

\newtheorem{lemma}[theorem]{Lemma}
\newcommand{\R}{\mathbb R}
\newcommand{\N}{{\mathbb N}}
\newcommand{\T}{{\mathbb T}}
\let\pa\partial
\let\na\nabla
\let\eps\varepsilon
\newcommand{\diver}{\mbox{\rm div}}

\DeclareMathOperator*{\argmin}{argmin}

\begin{document}
\title[A higher-order gradient flow scheme]{A higher-order gradient flow scheme 
for a singular one-dimensional diffusion equation}

\author[B. D\"uring]{Bertram D\"uring}
\address{Department of Mathematics, University of Sussex, Pevensey II,
Brighton BN1 9QH, United Kingdom}
\email{b.during@sussex.ac.uk}
\author[P. Fuchs]{Philipp Fuchs}
\address{Institute for Analysis and Scientific Computing,
Vienna University of Technology, Wiedner Hauptstr. 8-10, 1040 Wien, Austria}
\email{philipp.fuchs@tuwien.ac.at}
\author[A. J\"ungel]{Ansgar J\"ungel}
\address{Institute for Analysis and Scientific Computing,
Vienna University of Technology, Wiedner Hauptstr. 8-10, 1040 Wien, Austria}
\email{juengel@tuwien.ac.at}

\thanks{The last two authors acknowledge partial support from   
the Austrian Science Fund (FWF), grants P24304, P27352, and W1245} 

\begin{abstract}
A nonlinear diffusion equation, interpreted as a Wasserstein gradient flow, 
is numerically solved in one space dimension
using a higher-order minimizing movement scheme based on the
BDF (backward differentiation formula) discretization. In each time step,
the approximation is obtained as the solution of a constrained quadratic
minimization problem on a finite-dimensional space consisting of
piecewise quadratic basis functions.
The numerical scheme conserves the mass and dissipates the
$G$-norm of the two-step BDF time approximation. Numerically, also the discrete 
entropy and variance are decaying. 
The decay turns out to be exponential in all cases. 
The corresponding decay rates are computed numerically for various grid numbers.
\end{abstract}

\keywords{Super-fast diffusion equation, Wasserstein gradient flow, minimizing
movements, higher-order scheme, G-norm.}
\subjclass[2000]{65M99, 35K20.}

\maketitle


\section{Introduction}

The aim of this paper is to propose and study a fully discrete higher-order
variant of the minimizing movement scheme in one space dimension 
for the nonlinear diffusion equation
\begin{equation}\label{1.eq}
  \pa_t u = \alpha^{-1}\Delta(u^\alpha) \quad\mbox{in }\T^d,\ t>0, \quad u(0)=u^0,
\end{equation}
with negative exponent $\alpha<0$, where $\T^d$ is the $d$-dimensional
torus. 
Equation~\eqref{1.eq} can be written as the gradient flow of the entropy 
$S[u]=(\alpha(\alpha-1))^{-1}\int_{\T^d}u^\alpha dx$ with respect to the
Wasserstein distance.
The gradient flow formulation gives rise to a natural semi-discretization in time
of the evolution by means of the minimizing movement scheme, leading to 
a minimization problem for the sum of kinetic and potential energies.

Minimizing movement schemes for evolution equations with an underlying
gradient flow structure were first suggested by De Giorgi \cite{DeG93}
in an abstract framework. 
Jordan, Kinderlehrer, and Otto \cite{JKO98} have shown that the solution
to the linear Fokker-Planck equation can be obtained by minimizing the 
logarithmic entropy
in the Wasserstein space. Since then, many nonlinear evolution equations have been
shown to constitute Wasserstein gradient flows, for instance the porous-medium
equation \cite{Ott01}, the Keller-Segel model \cite{BCC08}, 
equations for interacting gases \cite{CMV06}, and a nonlinear fourth-order equation
for quantum fluids \cite{GST09}.

The minimizing movement scheme can be interpreted as
an implicit Euler semi-discre\-ti\-za\-tion 
with respect to the Wasserstein gradient flow
structure. While for some time this has been mainly used as an analytical
tool, more recently numerical approximations of evolution equations
using this scheme have been proposed.
In one space dimension, the optimal transport metric becomes flat when
re-parametrized by means of inverse cumulative functions, which simplifies
the numerical solution; see e.g.\ \cite{AgBo13,BCC08,MaOs14,Osb15}. 
For multi-dimensional
situations, one approach is based on the Eulerian representation of the
discrete solution on a fixed grid. The resulting problem can be solved by using
interior point methods \cite{BFS12}, finite elements \cite{BCW10}, 
or finite volumes \cite{CCH15}. Another approach employs the Lagrangian 
representation, which is well adapted to optimal transport. 
Examples are explicit marching schemes \cite{GoTo06}, 
moving meshes \cite{BCW13}, linear finite elements for a fourth-order
equation \cite{DMM10}, reformulations in terms of evolving diffeomorphisms
\cite{CaMo09}, and entropic smoothing using the Kullback-Leibler
divergence \cite{Pey15}. 
The connection between Lagrangian schemes and the gradient flow structure
was investigated in \cite{KiWa99}.
In this paper, we will use the Lagrangian viewpoint.

The minimizing movement scheme of De Giorgi 
is of first order in time only since it is
based on the implicit Euler method. Concerning higher-order schemes, we are only
aware of the paper \cite{WeWi10}. There, second-order gradient flow schemes were
suggested for the Euler equations, 
with finite differences in space and the two-step BDF 
(Backward Differentiation Formula) method or diagonally implicit Runge-Kutta
(DIRK) schemes in time. 

In this paper, we propose a fully discrete second-order 
minimizing movement scheme using quadratic finite elements in space and the two-step
BDF method in time. We consider periodic point-symmetric solutions.
The finite-dimensional minimization problem, constrained
by the mass conservation, is solved by the method of Lagrange multipliers
which leads to a sequential quadratic programming problem.

By construction, our numerical scheme is of second order both in time and space,
it conserves the mass and dissipates the
$G$-norm of the approximation $\mathbf{g}^k$ of the quadratic ansatz functions
at time step $k$, 
\begin{equation}\label{1.Gnorm}
  \|(\mathbf{g}^{k+1},\mathbf{g}^k)\|_G^2 
	= \frac52(\mathbf{g}^{k+1})^\top M_w \mathbf{g}^{k+1}
	- 2(\mathbf{g}^{k+1})^\top M_w\mathbf{g}^{k} 
	+ \frac12(\mathbf{g}^{k})^\top M_w\mathbf{g}^{k},
\end{equation}
where the matrix $M_w$ is defined in the approximation of the Wasserstein metric 
on the space of the quadratic ansatz functions. We refer to Section~\ref{sec.disc}
for details. It turns out that numerically, the relative $G$-norm decays
exponentially fast to zero. Although we cannot expect for the multistep scheme
that the discrete entropy decays exponentially fast, this holds true for the
numerical experiments performed in this paper. Furthermore, also the discrete 
variance of the original variable $u$ and the Lagrangian variable decay 
exponentially fast. 

The numerical tests also indicate that the decay rate of the entropy is
increasing in the number of grid points, i.e., the discrete decay rates
are smaller than the (expected) value for the continuous equation.
This result is in accordance with the findings
of \cite{Mie13} for a finite-volume approximation of a one-dimensional 
linear Fokker-Planck equation.

Let us briefly review the literature for the
diffusion equation \eqref{1.eq} with $\alpha<0$. Equation~\eqref{1.eq} 
with $\alpha=-1$ appears in the modeling of heat conduction in solid Helium, 
where the solution $u$ corresponds to the inverse temperature \cite{Ros79}.
When this equation is considered in the whole space
or in a bounded domain with homogeneous Dirichlet boundary conditions, it is 
sometimes called the super-fast diffusion equation \cite[Chap.~9]{Vaz06}.
The critical exponent of this equation in one space dimension 
is $\alpha=-1$. For $\alpha>-1$ and $u^0\in L^1(\R)$, we have a smoothing property,
namely $u(t)\in L^\infty(\R)$ for any $t>0$ \cite[Section~9.1]{Vaz06}. 
For $\alpha\le -1$, no solutions exist with data in $L^1(\R)$. The non-existence
range in dimensions $d\ge 2$ contains even all negative exponents, 
$\alpha<0$ \cite{Vaz92}. However, if
$d=1$ and $\alpha\le -1$, there is a weak smoothing effect. Indeed, given
$u^0\in L_{\rm loc}^p(\R)$ for $p=(1-\alpha)/2$, the solution $u$ exists 
and is locally bounded in $\R\times(0,\infty)$. Furthermore, if $u^0\in L^p(\R)$ 
then there is instantaneous extinction, i.e.\ $u(t)=0$ in $\R$ for all $t>0$
\cite[Theorem~9.3]{Vaz06}.

Clearly, such results cannot be expected when the super-fast diffusion
equation is considered on the torus. 
We expect that global-in-time weak solutions exist, 
which converge to the constant steady state as $t\to\infty$.
Since we could not find any results on the existence and large-time asymptotics 
in the literature in that situation and since the use of negative exponents
is less standard, we provide a (short) proof for completeness
in the appendix; see Section~\ref{sec.ex} for details.

The paper is organized as follows. The global existence result and the
exponential decay of the solutions to the constant steady state as well as
some basic properties of the Wasserstein distance for periodic functions are stated
in Section~\ref{sec.pre}. Section~\ref{sec.disc} is devoted to the description
of the numerical scheme, and some numerical experiments are presented in
Section~\ref{sec.num}. We conclude in Section \ref{sec.conc}.
The appendix contains the proofs of the existence and
large-time asymptotics theorems and the calculations of the coefficients of
the matrix $M_w$ and the Hessian of the discrete entropy.


\section{Prerequisites}\label{sec.pre}

\subsection{Existence of solutions and large-time asymptotics}\label{sec.ex}

Equation \eqref{1.eq} on the torus $\T^d$ does not possess the non-existence 
or instantaneous extinction properties of the super-fast diffusion equation in
the whole space
since mass cannot get lost. In fact, we expect that for any $\alpha<0$, there
exists a global weak solution. If the initial datum $u^0$ is
nonnegative only, \eqref{1.eq} is still a singular diffusion equation.
However, because of the fast diffusion, the solution becomes positive
for all positive times and, by parabolic regularity theory, also smooth.

\begin{theorem}[Existence of weak solutions]\label{thm.ex}
Let $\alpha<0$ and let $u^0\in L^\infty(\T^d)$ satisfy $u^0\ge 0$. 
If $\alpha=-1$, we assume additionally that $\int_{\T^d}\log u^0 dx>-\infty$. 
Then there exists a unique weak
solution to \eqref{1.eq} satisfying $u^\alpha\in L^2(0,T;H^1(\T^d))$,
$\pa_t u \in L^2(0,T;H^1(\T^d)')$ for all $T>0$, and $0\le u(x,t)\le
\sup_{\T^d} u^0$ for $x\in\T^d$, $t\ge 0$.
\end{theorem}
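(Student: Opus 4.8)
The plan is to construct the solution by a regularization–compactness argument and to prove uniqueness from the monotone ($H^{-1}$) structure of the equation. Writing \eqref{1.eq} as $\pa_t u=\Delta\Phi(u)$ with $\Phi(u)=\alpha^{-1}u^\alpha$, the only genuine difficulty is the singularity of $\Phi$ and of $\Phi'(u)=u^{\alpha-1}$ at $u=0$. First I would regularize the datum by $u^0_\delta=u^0+\delta$ with $\delta>0$ and solve the problem $\pa_t u_\delta=\Delta\Phi(u_\delta)$, which is now uniformly parabolic since the solution will stay bounded away from zero, by standard quasilinear parabolic theory (or a Galerkin scheme). Because constants are stationary solutions and $\Phi$ is increasing, the comparison principle yields the two-sided bound $\delta\le u_\delta\le\sup_{\T^d}u^0+\delta$, in particular an $L^\infty$ bound uniform in $\delta$, while integrating the equation conserves mass.

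Next I would derive the $\delta$-uniform a priori estimates. The essential one comes from testing the equation with the natural variable $u_\delta^\alpha$: with the free energy $E[u]=(\alpha(\alpha+1))^{-1}\int_{\T^d}u^{\alpha+1}\,dx$ for $\alpha\neq-1$, and $E[u]=-\int_{\T^d}\log u\,dx$ in the borderline case $\alpha=-1$ (which is precisely why the hypothesis $\int_{\T^d}\log u^0\,dx>-\infty$ is imposed), one obtains the dissipation identity
\[
  \frac{d}{dt}E[u_\delta]=-\frac{1}{\alpha^2}\int_{\T^d}|\na(u_\delta^\alpha)|^2\,dx .
\]
Together with the dissipation of the gradient-flow entropy $S[u]=(\alpha(\alpha-1))^{-1}\int_{\T^d}u^\alpha\,dx$, which controls the spatial mean of $u_\delta^\alpha$, this bounds $u_\delta^\alpha$ in $L^2(0,T;H^1(\T^d))$; reading off $\pa_t u_\delta=\diver\na\Phi(u_\delta)$ then bounds $\pa_t u_\delta$ in $L^2(0,T;H^1(\T^d)')$. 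An Aubin--Lions argument produces a subsequence with $u_\delta\to u$ strongly in $L^2$ and a.e., $u_\delta^\alpha\rightharpoonup v$ weakly in $L^2(0,T;H^1(\T^d))$, and $\pa_t u_\delta\rightharpoonup\pa_t u$.

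The main obstacle is to pass to the limit in the singular nonlinearity and identify $v=u^\alpha$. Since the lower bound $u_\delta\ge\delta$ degenerates as $\delta\to0$, the difficulty concentrates on the set where $u$ is small, and it is here that the finiteness of the regularized energies $E[u^0_\delta]$ must be converted into usable limit information. I would combine the a.e.\ convergence of $u_\delta$ with the uniform $L^2(0,T;H^1(\T^d))$ bound on $u_\delta^\alpha$ (hence uniform integrability) to identify $v=u^\alpha$ and to conclude that $u>0$ a.e.\ for $t>0$; the strong smoothing characteristic of super-fast diffusion is what forces the set $\{u=0\}$ to be negligible for positive times, so that $u^\alpha$ is well defined and the regularity $u^\alpha\in L^2(0,T;H^1(\T^d))$ survives in the limit by weak lower semicontinuity. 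I expect this step, and the quantitative control of $u$ from below that it requires, to be the most delicate part of the argument.

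Uniqueness is comparatively soft and follows from monotonicity. Given two weak solutions $u_1,u_2$ with the stated regularity and the same datum, the difference $w=u_1-u_2$ has zero spatial mean by mass conservation, so $(-\Delta)^{-1}w$ is well defined on $\T^d$. Testing $\pa_t w=\Delta(\Phi(u_1)-\Phi(u_2))$ against $(-\Delta)^{-1}w$ gives
\[
  \frac12\frac{d}{dt}\|w\|_{H^1(\T^d)'}^2
  =-\int_{\T^d}\bigl(\Phi(u_1)-\Phi(u_2)\bigr)(u_1-u_2)\,dx\le 0,
\]
since $\Phi$ is increasing; as $w(0)=0$ this forces $w\equiv0$. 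Finally, the two-sided bound $0\le u\le\sup_{\T^d}u^0$ passes to the limit from the $\delta$-level, and the positivity for $t>0$ together with parabolic regularity yields the smoothness mentioned before the statement.
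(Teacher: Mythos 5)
Your proposal follows essentially the same route as the paper's proof of existence: regularize the datum by adding a small constant, solve the resulting uniformly parabolic problem (the paper does this by a Schauder fixed-point argument on the linearization $\pa_t u=\diver(v^{\alpha-1}\na u)$ rather than by quoting quasilinear theory), get the uniform $L^\infty$ bounds from the maximum principle, test with $u^\alpha$ --- respectively use the logarithmic entropy when $\alpha=-1$, which is exactly where the hypothesis $\int_{\T^d}\log u^0\,dx>-\infty$ enters in both arguments --- to bound $u^\alpha$ in $L^2(0,T;H^1(\T^d))$ and $\pa_t u$ in $L^2(0,T;H^1(\T^d)')$, and conclude with Aubin--Lions and weak convergence of the nonlinearity. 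Your flagged worry about identifying the weak limit of $u_\delta^\alpha$ is legitimate; the paper resolves it the same way you sketch (a.e.\ convergence of $u_\eps$ plus the uniform $L^2(0,T;H^1)$ bound, which by Fatou forces $\{u=0\}$ to be null), though it does so tacitly in one line. The one genuine difference is uniqueness: the paper's written proof never addresses it despite the statement claiming it, whereas your $H^{-1}$ duality argument --- testing $\pa_t(u_1-u_2)=\Delta\bigl(\Phi(u_1)-\Phi(u_2)\bigr)$ against $(-\Delta)^{-1}(u_1-u_2)$ and using that $\Phi'(s)=s^{\alpha-1}>0$ --- supplies exactly the missing piece, and it is legitimate at the stated regularity level since $u_i^\alpha\in L^2(0,T;H^1(\T^d))$ makes the pairing well defined.
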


The proof of this theorem is based on a standard regularization procedure
but we need to distinguish carefully the cases $-1<\alpha<0$, $\alpha=-1$,
and $\alpha<-1$.
We present the (short) proof for completeness in Appendix \ref{app.ex}.
For $t\to\infty$, the (smooth) solution converges to the constant steady state.
Since this constant is positive, diffusion slows down when time increases.
Therefore, we cannot expect instantaneous extinction phenomena. Still,
we are able to prove that the convergence is exponentially fast with respect to the
$L^1$-norm. We introduce the entropy
$$
  H_\beta[u] = \int_{\T^d}u^\beta \,dx - \bigg(\int_{\T^d}u\,dx\bigg)^\beta, 
	\quad \beta>1.
$$
The steady state of \eqref{1.eq} is given by $u_\infty=\int_{\T^d} u^0\, dx$
if $\mbox{vol}(\T^d)=1$.

\begin{theorem}[Exponential decay]\label{thm.time}
Let $u$ be a smooth positive solution to \eqref{1.eq} and let
$\text{\rm vol}(\T^d)=1$. Then
$$
  \|u(t)-u_\infty\|_{L^1(\T^d)} \le C_\beta H_\beta[u^0]^{1/\beta}
	\|u^0\|_{L^1(\T^d)}^{1/2}e^{-\lambda t},
$$
where $C_\beta>0$ and for $\alpha<0$, $1<\beta\le 2$,
$$
  \lambda = \frac{2(\beta-1)}{\beta C_B}(\sup_{\T^d} u^0)^{\alpha-1}, 
$$
and $C_B>0$ is the constant in the Beckner inequality \eqref{app.bec};
for $-1\le\alpha<0$, $\beta=2(1-\alpha)$,
$$
	\lambda = \frac{2(1-2\alpha)}{(1-\alpha)\|u^0\|_{L^1(\T^d)}^{1-\alpha}}.
$$
\end{theorem}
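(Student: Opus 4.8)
The plan is to run the entropy--entropy-production (Bakry--\'Emery) method for the functional $H_\beta$, using the Beckner inequality \eqref{app.bec} to close the entropy estimate and a Csisz\'ar--Kullback--Pinsker inequality to pass to the $L^1$-norm. First I would record that integrating \eqref{1.eq} over $\T^d$ and using periodicity conserves the mass, so with $\mathrm{vol}(\T^d)=1$ the steady state is $u_\infty=\int_{\T^d}u^0\,dx=\|u^0\|_{L^1(\T^d)}$ and the subtracted term in $H_\beta$ is constant in time. Differentiating $H_\beta[u]$ along the flow, integrating by parts, and using $\na u^\gamma=\gamma u^{\gamma-1}\na u$ twice yields the dissipation identity
\[
  \frac{d}{dt}H_\beta[u]=-\beta(\beta-1)\int_{\T^d}u^{\beta+\alpha-3}|\na u|^2\,dx=-\frac{4(\beta-1)}{\beta}\int_{\T^d}u^{\alpha-1}\,|\na u^{\beta/2}|^2\,dx\le0,
\]
all manipulations being justified by the smoothness and positivity assumed in the theorem together with $0\le u\le\sup_{\T^d}u^0$ from Theorem~\ref{thm.ex}.

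The next step is to bound the production from below by $H_\beta$, and here the two regimes of $\beta$ separate. For $1<\beta\le2$ I would discard the weight via $u^{\alpha-1}\ge(\sup_{\T^d}u^0)^{\alpha-1}$ (legitimate since $\alpha-1<0$ and $u\le\sup_{\T^d}u^0$) and apply \eqref{app.bec} to $f=u^{\beta/2}$ with exponent $p=2/\beta\in[1,2]$; because $f^2=u^\beta$ and $\int_{\T^d}f^p\,dx=\int_{\T^d}u\,dx$, the left-hand side of Beckner is exactly $H_\beta[u]$, so the production dominates $\frac{4(\beta-1)}{\beta C_B}(\sup_{\T^d}u^0)^{\alpha-1}H_\beta[u]$. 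For $\beta=2(1-\alpha)\in(2,4]$ the weight cannot be discarded without losing the sharp mass dependence; instead the special exponent makes $(\beta+\alpha-1)/2=(1-\alpha)/2$, turning the production into the pure Dirichlet form $\frac{8(1-2\alpha)}{1-\alpha}\int_{\T^d}|\na u^{(1-\alpha)/2}|^2\,dx$, and I would close the estimate with the inequality
\[
  \int_{\T^d}|\na u^{(1-\alpha)/2}|^2\,dx\ge\frac{1-\alpha}{2\|u^0\|_{L^1(\T^d)}^{1-\alpha}}H_\beta[u].
\]
In either regime Gr\"onwall's lemma then gives exponential decay $H_\beta[u(t)]\le H_\beta[u^0]e^{-\mu t}$ with entropy rate $\mu=\frac{4(\beta-1)}{\beta C_B}(\sup_{\T^d}u^0)^{\alpha-1}$, respectively $\mu=\frac{4(1-2\alpha)}{\|u^0\|_{L^1(\T^d)}^{1-\alpha}}$.

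It remains to convert entropy decay into the $L^1$-estimate. Writing $H_\beta[u]=\int_{\T^d}(u^\beta-u_\infty^\beta-\beta u_\infty^{\beta-1}(u-u_\infty))\,dx$ as the Bregman divergence of $s\mapsto s^\beta$, a second-order Taylor expansion together with a generalized Csisz\'ar--Kullback--Pinsker inequality bounds $\|u-u_\infty\|_{L^1(\T^d)}$ by $H_\beta[u]^{1/2}$ when $1<\beta\le2$ (through $\|\cdot\|_{L^1}\le\|\cdot\|_{L^2}$ on the unit-volume torus) and by $H_\beta[u]^{1/\beta}$ when $\beta\ge2$ (through $\|\cdot\|_{L^1}\le\|\cdot\|_{L^\beta}$). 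Inserting $H_\beta[u(t)]\le H_\beta[u^0]e^{-\mu t}$ makes the $L^1$-rate equal to $\mu$ times the respective exponent; a short computation shows this reproduces both stated values of $\lambda$, while the prefactor collects $H_\beta[u^0]^{1/\beta}$ and the mass factor $\|u^0\|_{L^1(\T^d)}^{1/2}$ carried by the Csisz\'ar--Kullback--Pinsker constant.

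I expect the main obstacle to be the displayed functional inequality in the second regime. Unlike the first case, where the weight is simply dropped and Beckner applies verbatim, here the entropy $H_\beta$ involves $\int_{\T^d}u^\beta\,dx=\|u^{(1-\alpha)/2}\|_{L^4(\T^d)}^4$, whereas the naive Beckner substitution $f=u^{(1-\alpha)/2}$ (with $p=2/(1-\alpha)$) controls only the lower-order quantity $\int_{\T^d}u^{1-\alpha}\,dx-\|u^0\|_{L^1(\T^d)}^{1-\alpha}$; reconciling the two and extracting the sharp, constant-free factor $\|u^0\|_{L^1(\T^d)}^{-(1-\alpha)}$ requires a more careful convex-Sobolev estimate rather than a direct appeal to \eqref{app.bec}. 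The Taylor/Csisz\'ar--Kullback step and the tracking of constants in the prefactor are by comparison routine.
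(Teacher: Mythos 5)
Your first regime is exactly the paper's argument: the dissipation identity $\frac{d}{dt}H_\beta=-\frac{4(\beta-1)}{\beta}\int_{\T^d}u^{\alpha-1}|\na u^{\beta/2}|^2\,dx$, the bound $u^{\alpha-1}\ge(\sup_{\T^d}u^0)^{\alpha-1}$, Beckner, Gr\"onwall, and the generalized Csisz\'ar--Kullback inequality. The genuine gap is in the second regime, and you have located it yourself: the displayed inequality $\int_{\T^d}|\na u^{(1-\alpha)/2}|^2\,dx\ge\frac{1-\alpha}{2\|u^0\|_{L^1}^{1-\alpha}}H_\beta[u]$ is asserted without proof, and your closing paragraph concedes that you do not see how to obtain it from \eqref{app.bec}. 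Since this inequality is the entire content of the second decay rate (the rest of the argument is bookkeeping), the proposal as written does not prove the second half of the theorem.

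The paper closes this step without any new convex--Sobolev inequality, by a two-stage elementary argument that you may want to reconstruct. First, since $u^{(\alpha+\beta-1)/2}=u^{\beta/4}$ and $\beta/2=1-\alpha\in(1,2]$ precisely when $-1\le\alpha<0$ (this is where that restriction enters), Beckner applied at the \emph{lower} exponent $\beta/2$ bounds the Dirichlet form from below by $H_{\beta/2}[u]=\int_{\T^d}u^{\beta/2}dx-(\int_{\T^d}u\,dx)^{\beta/2}$ --- exactly the ``lower-order quantity'' you dismissed. Second, the two entropies are compared through the factorization $H_\beta[u]=\bigl(\|u^{\beta/2}\|_{L^2}+\|u\|_{L^1}^{\beta/2}\bigr)\bigl(\|u^{\beta/2}\|_{L^2}-\|u\|_{L^1}^{\beta/2}\bigr)$ together with $\|f\|_{L^1}\le\|f\|_{L^2}$ and Jensen on the unit-volume torus; mass conservation then converts $\|u(t)\|_{L^1}$ into $\|u^0\|_{L^1}$ and produces the factor $2\|u^0\|_{L^1}^{1-\alpha}$ in the rate. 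Two warnings if you pursue this: (i) closing Gr\"onwall for $H_\beta$ requires a bound of the form $H_{\beta/2}\ge c\,H_\beta$, whereas the factorization most directly yields $H_\beta\ge 2\|u\|_{L^1}^{\beta/2}H_{\beta/2}$, so the direction of the comparison must be checked carefully (this is delicate in the paper's own write-up as well); (ii) your proposed $L^\beta$-interpolation version of Csisz\'ar--Kullback for $\beta>2$ is not what the paper uses --- its quoted inequality is restricted to $1<\beta\le 2$ --- so that conversion step also needs an actual proof rather than an appeal to the cited result.
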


In the first result, the decay rate $\lambda$ depends on $\sup_{\T^d}u^0$, which
seems to be not optimal. The decay rate in the second result depends on
the $L^1$-norm of $u^0$ only but we need a particular value of $\beta$.
The proof is based on the entropy method; see Appendix~\ref{app.ex}.
Stronger decay results have been derived for the fast-diffusion
equation in the whole space or in bounded domains; see, e.g.,
\cite{BDGV10,Vaz06}. However, our proof is very elementary and just an illustration
for the qualitative behavior of the solutions to \eqref{1.eq}.


\subsection{The Wasserstein distance for periodic functions}\label{sec.wass}

We recall some basic facts about mass transportation and Wasserstein distances
following \cite{DMM10}.
For more details, we refer to \cite{AGS05,Vil09}. Let $X$ be a Riemannian
manifold with distance $d:X\times X\to\R_+:=[0,\infty)$ and let $P_M(X)$ be the
convex set of measures with fixed mass $M>0$ on $X$.
The $L^2$-Wasserstein distance of two measures $\mu_1$, $\mu_2\in P_M(X)$ is
defined by
\begin{equation}\label{pre.W}
  W[\mu_1,\mu_2]^2 = \inf_{\pi\in\Pi(\mu_1,\mu_2)}\int_{X\times X}d(x,y)^2
	\,d\pi(x,y),
\end{equation}
where $\Pi(\mu_1,\mu_2)$ is the set of transport plans connecting $\mu_1$ 
with $\mu_2$, i.e.\ the set of all measures on $X\times X$ with respective marginals
$\mu_1$ and $\mu_2$, 
$$
  \pi(A\times X) = \mu_1(A), \quad \pi(X\times B) = \mu_2(B)
$$
for all measurable sets $A$, $B\subset X$. If the measures $\mu_1$ and $\mu_2$
possess densities $u_1$ and $u_2$, respectively, with respect to a fixed 
measure on $X$, then we write, slightly abusing the notation, $W[u_1,u_2]$
instead of $W[\mu_1,\mu_2]$. 

In one space dimension, there exists an explicit formula to compute $W$.
Let $X=(a,b)\subset\R$ be a (possibly infinite) interval, and let $\mu_1$,
$\mu_2\in P_M(X)$ be two measures. We define their distribution functions 
$$
  U_i:(a,b)\to[0,M],\quad U_i(x)=\mu_i((a,x]),\quad i=1,2.
$$
As these functions are right-continuous and monotonically increasing, they possess
right-continuous increasing pseudo-inverse functions $G_i:[0,M]\to[a,b]$,
given by
$$
  G_i(\omega) = \inf\{x\in(a,b):U_i(x)>\omega\}, \quad i=1,2.
$$
Then \cite{Vil03},
\begin{equation}\label{pre.W2}
  W[u_1,u_2]^2 = \int_0^M(G_1(\omega)-G_2(\omega))^2 d\omega.
\end{equation}

This formula does not extend to $X=\T\simeq(0,1)$ because of
the topology induced by the periodic boundary conditions,
$d(x,y)=\min\{|x-y|,1-|x-y|\}$. The reason is that mass can be transported
either clock- or counter-clockwise (see \cite{DMM10} for details). However,
if the densities $u_1$ and $u_2$ are point-symmetric, \eqref{pre.W2} still
holds. More precisely, let $u_i(x)=u_i(1-x)$ for $x\in(0,1)$ and $i=1,2$.
Then \eqref{pre.W2} holds, where $G_i:[0,M]\to[0,1]$ is the
inverse function of $U_i(x) = \int_0^x u_i(y)dy$ \cite[Lemma 2.2]{DMM10}.


\section{Time discretization and Lagrangian coordinates}\label{sec.disc}

\subsection{The semi-discrete BDF scheme}\label{sec.BDF}

We introduce the second-order minimizing movement scheme. 
First, we explain the underlying idea for the finite-dimensional gradient flow
\begin{equation}\label{disc.x}
  \dot x = -\na\phi(x), \quad t>0, \quad x(0)=x_0,
\end{equation}
where $\phi:\R^d\to\R$ is a smooth potential. This equation can be approximated
by the following minimization problem:
$$
  x^{n+1} = \argmin_{x\in\R^d}\Phi(x), \quad
	\Phi(x) = \frac{1}{2\tau}\|x-x^n\|^2 + \phi(x),
$$
where $\tau>0$ is the time step size and $x^n$ is an approximation of
$x(n\tau)$. The minimizer $x^{n+1}$ is a critical point and thus,
$$
  0 = \na\Phi(x^{n+1}) = \frac{1}{\tau}(x^{n+1}-x^n) + \na\phi(x^{n+1}),
$$
which corresponds to the implicit Euler scheme. 

Instead of the Euler scheme, we wish to discretize \eqref{1.eq} by a multistep
method. As an example, consider the two-step BDF (or BDF-2) method,
$$
  \frac{1}{\tau}\left(\frac32x^{n+2} - 2x^{n+1} + \frac12 x^{n}\right)
	= -\na\phi(x^{n+2}),
$$
where $x^n$ and $x^{n+1}$ are given. Writing this scheme as
$$
  \frac12(x^{n}-x^{n+2}) - 2(x^{n+1}-x^{n+2}) = -\tau\na\phi(x^{n+1}),
$$
we see that $x^{n+2}$ is a critical point of the functional
$$
  \Phi(x) = -\frac{1}{4\tau}\|x^n-x\|^2 + \frac{1}{\tau}\|x^{n+1}-x\|^2 + \phi(x).
$$

More generally, the BDF-$k$ approximation of \eqref{disc.x},
$$
  \sum_{i=0}^k a_ix^{n+i} = -\tau\na\phi(x^{n+k}),
$$
for given $x^n,\ldots,x^{n+k-1}$ can be formulated as 
$$
  -\tau\na\phi(x^{n+k}) = \sum_{i=0}^{k-1}a_ix^{n+i} + a_kx^{n+k}
	= \sum_{i=0}^{k-1}a_i(x^{n+i}-x^{n+k}),
$$
since $\sum_{i=0}^k a_i=0$, or as the minimization problem
\begin{equation}\label{disc.min}
  x^{n+k} = \argmin_{x\in\R^d}\Phi(x), \quad
	\Phi(x) = -\frac{1}{2\tau}\sum_{i=0}^{k-1}a_i\|x^{n+i}-x\|^2 + \phi(x),
\end{equation}
In a similar way, we may formulate general multistep methods. 
We recall that BDF-$k$ schemes are
consistent if $\sum_{i=0}^ka_i=0$, $\sum_{i=1}^k ia_i=1$ and 
zero-stable if and only if $k\le 5$ \cite[Section~11.5]{QSS07}. 

The same idea as above is applicable to gradient flows in the $L^2$-Wasserstein
distance. For this, we replace the $L^2$ norm by the Wasserstein distance.
For equation \eqref{1.eq}, scheme \eqref{disc.min} turns into
\begin{equation}\label{disc.bdf}
  u^{n+k} = \argmin_{u\in P_M(\T)}\Phi(u), \quad
	\Phi(u) = -\frac{1}{2\tau}\sum_{i=0}^{k-1}a_iW[u^{n+i},u]^2 + S[u],
\end{equation}
where $S[u]=(\alpha(\alpha-1))^{-1}\int_\T u^\alpha \, dx$. 
Scheme \eqref{disc.bdf} can be interpreted as a BDF-$k$ minimizing movement scheme.


\subsection{Lagrangian coordinates}\label{sec.lagr}

Before introducing the spatial discretization, we rewrite the scheme 
\eqref{disc.bdf} in a more explicit manner, using the inverse distribution
functions $G$ and $G^*$ of $u$ and $u^*$, respectively, which were introduced
in Section \ref{sec.wass}. The numerical procedure is similar to that in
\cite{DMM10} but our higher-order scheme introduces some changes.
We call $\omega=U(x)\in[0,M]$ the Lagrangian coordinate, which is conjugate 
to the Eulerian coordinate $x\in\T$, and we refer to the inverse
distribution function $G$ as the associated Lagrangian map.
For a consistent change of variables, we need to express the entropy $S[u]$
in terms of the Lagrangian coordinates. With the formula for the
differential of an inverse function,
$$
  u(x) = \pa_x U(x) = \frac{1}{\pa_\omega G(\omega)},
$$
and the change of unknowns $x=G(\omega)$ under the integral in $S[u]$, we obtain
$$
  S[u] = \frac{1}{\alpha(\alpha-1)}\int_\T\frac{u(x)\, dx}{u(x)^{1-\alpha}}
	= \frac{1}{\alpha(\alpha-1)}\int_\T g(\omega)^{1-\alpha}\, d\omega,
$$
where $g(\omega)=\pa_\omega G(\omega)$. Note that the exponent in the
integrand is positive since $\alpha<0$. In terms of $g$, the expression for the 
Wasserstein distance in \eqref{pre.W2} becomes (see \cite[Section~2.3]{DMM10})
\begin{align*}
  W[u,u^*]^2 &= \int_0^M(G(\omega)-G^*(\omega))^2\, d\omega
	= \int_0^M\bigg(\int_0^\omega(g(\eta)-g^*(\eta))d\eta\bigg)^2
               \, d\omega \\
	&= \int_0^M\int_0^M(M-\max\{\eta,\eta'\})(g(\eta)-g^*(\eta))(g(\eta')-g^*(\eta'))
	\, d\eta\,d\eta'.
\end{align*}
This expression is simply a quadratic form in $g-g^*$ and thus easy to implement in 
the numerical scheme.
We summarize our results which slightly generalize Lemma 2.3 in \cite{DMM10}.

\begin{lemma}\label{lem.gu}
Let the initial datum $u^0:\T\to\R$ be point-symmetric. Then the solution
$u^{n+k}$ to the BDF-$k$ scheme \eqref{disc.bdf} is in one-to-one correspondence
to the solution $g^{n+k}$ obtained from the inductive scheme
\begin{equation}\label{disc.g}
  g^{n+k} = \argmin_g\Psi(g),
\end{equation}
with initial condition $g^0=1/u\circ G$ and given $g^1,\ldots,g^{n+k-1}$
obtained from a lower-order scheme. The argmin has to be taken over all
measurable functions $g:[0,M]\to(0,\infty)$ satisfying the mass constraint
$\int_0^M g(\omega)\,d\omega=1$, and the function $\Psi$ is given by
\begin{align}\label{disc.Psi}
  \Psi(g) &= -\frac{1}{2\tau}\int_0^M\int_0^M(M-\max\{\eta,\eta'\})\sum_{i=0}^{k-1}
	a_i(g(\eta)-g^{n+i}(\eta))(g(\eta')-g^{n+i}(\eta')) \, d\eta\,d\eta' \\
	&\phantom{xx}{}+ \frac{1}{\alpha(1-\alpha)}\int_0^M 
	g(\omega)^{1-\alpha}\, d\omega. \nonumber
\end{align}
Moreover, $\Phi(u)=\Psi(g)$ and the functions $u^n$ and $g^n$ are related by
\begin{equation}\label{disc.xom}
  u^n(x_\omega) = \frac{1}{g^n(\omega)}, \quad x_\omega =
  \int_0^\omega g^n(\eta)\, d\eta.
\end{equation}
\end{lemma}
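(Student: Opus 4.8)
The plan is to read the statement as a change of variables from the Eulerian density $u$ to the Lagrangian map, which turns the constrained minimization \eqref{disc.bdf} over $u\in P_M(\T)$ into the minimization \eqref{disc.g} over $g$. The backbone is the bijection between point-symmetric densities of mass $M$ and their inverse distribution functions recalled in Section~\ref{sec.wass}: to a density $u$ one associates $U(x)=\int_0^x u\,dy$ and its inverse $G:[0,M]\to[0,1]$, and sets $g=\pa_\omega G$; conversely, a positive $g$ with $\int_0^M g\,d\omega=1$ determines $G$ by integration and $u$ by $u(G(\omega))=1/g(\omega)$. The first step is thus to verify that this map is a bijection between $\{u\in P_M(\T): u \text{ point-symmetric}\}$ and $\{g:[0,M]\to(0,\infty) \text{ measurable}: \int_0^M g\,d\omega=1\}$. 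Here the mass $M$ is encoded in the domain $[0,M]$ of $g$, while the normalization $\int_0^M g=1$ merely expresses that $G$ sweeps out the unit torus, $G(M)-G(0)=1$.

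Once the bijection is in place, I would show $\Phi(u)=\Psi(g)$ by transforming each piece. The Wasserstein terms are handled exactly by the computation already displayed before the statement: for point-symmetric $u$ and $u^{n+i}$, formula \eqref{pre.W2} applies, and writing $G-G^{n+i}=\int_0^\omega(g-g^{n+i})$ and integrating by parts turns $W[u^{n+i},u]^2$ into the quadratic form $\int_0^M\int_0^M(M-\max\{\eta,\eta'\})(g-g^{n+i})(\eta)(g-g^{n+i})(\eta')\,d\eta\,d\eta'$. The entropy is transformed using $u(x)=1/\pa_\omega G(\omega)$ and the change of variables $x=G(\omega)$, $dx=g(\omega)\,d\omega$, which gives $u^\alpha\,dx=g^{1-\alpha}\,d\omega$ and hence $S[u]=(\alpha(\alpha-1))^{-1}\int_0^M g^{1-\alpha}\,d\omega$, the entropy term appearing in $\Psi$. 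Summing, the kinetic and potential parts of $\Phi$ in \eqref{disc.bdf} reproduce those of $\Psi$ in \eqref{disc.Psi}, and \eqref{disc.xom} is just the defining relation of the bijection restated.

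Since $\Phi(u)=\Psi(g)$ holds for every admissible pair and the bijection carries one constraint set onto the other, minimizers correspond one-to-one: $u^{n+k}$ minimizes $\Phi$ over the point-symmetric elements of $P_M(\T)$ if and only if the associated $g^{n+k}$ minimizes $\Psi$ subject to $\int_0^M g=1$. This yields the inductive scheme \eqref{disc.g}, with $g^0=1/(u^0\circ G^0)$ and the lower-order start-up values $g^1,\dots,g^{n+k-1}$ entering $\Psi$ through the sum over $i$.

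The step requiring the most care is the hypothesis underlying the Wasserstein formula: \eqref{pre.W2} is valid on the torus only when \emph{both} measures are point-symmetric, so I must ensure that every iterate entering $\Psi$ — the data $u^{n+i}$ and the minimizer $u^{n+k}$ itself — is point-symmetric. I would argue this by induction on $n$: point-symmetry of $u^0$ is assumed, and since each $W[u^{n+i},\cdot]^2$ and $S$ are invariant under $x\mapsto 1-x$ when the $u^{n+i}$ are symmetric, the minimizer can be taken point-symmetric as well, so the reduction \eqref{pre.W2} remains available at the next step. A secondary technical point is checking that the pseudo-inverse $G$ is a genuine, absolutely continuous and strictly increasing inverse on the relevant class, so that $g=\pa_\omega G$ is well defined and positive; positivity of $u$ for $t>0$, noted after Theorem~\ref{thm.ex}, is what keeps $g$ finite and makes the change of variables legitimate.
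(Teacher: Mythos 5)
Your proposal is correct and follows essentially the same route as the paper, whose ``proof'' consists of the change of variables $x=G(\omega)$ giving $u^\alpha\,dx=g^{1-\alpha}\,d\omega$ and the rewriting of \eqref{pre.W2} as the quadratic form in $g-g^*$ displayed just before the lemma, together with the bijection between point-symmetric densities and Lagrangian maps taken from \cite{DMM10}. Your additional induction showing that point-symmetry propagates to all iterates (so that \eqref{pre.W2} stays applicable) is a point the paper leaves implicit, and is a welcome piece of extra care rather than a deviation.
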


In Lagrangian coordinates, the problem has become a minimization problem
in $\Psi(g)$ which is the sum of a quadratic form and a convex functional,
hence it is convex. In the special case $\alpha=-1$, the second integral
in \eqref{disc.Psi} is quadratic too which simplifies the numerical discretization.
Therefore, we will consider mainly numerical examples with $\alpha=-1$
in Section \ref{sec.num}.


\subsection{Spatial discretization}\label{disc.space}

We approximate the infinite-dimensional variational problem \eqref{disc.g} by
a finite-dimensional one. Minimization in \eqref{disc.g} is performed over
the finite-dimensional space of quadratic ansatz functions. 
This generalizes the approach in \cite{DMM10}, where only linear ansatz functions
were used. We define the ansatz space as follows.

Let $N\in\N$ and a mesh $\{x_0,\ldots,x_{N}\}$ on $[0,1]$ be given 
with $x_0=0$ and $x_{N}=1$. Using \eqref{disc.xom}, 
we construct the mesh $\Omega_N=\{\omega_0,\omega_1\ldots,\omega_{N}\}$ of
$[0,M]$. Then $\omega_0=0$, $\omega_{N}=M$,
$\omega_{i}<\omega_{i+1}$, and $\omega_{N-i}=M-\omega_{i}$ (point-symmmetry), 
where $i=1,\ldots,N-1$. 
Since we wish to introduce quadratic ansatz functions, we add the grid points 
$\omega_{j+1/2}=(\omega_{j+1}+\omega_{j})/2$ for $j=0,\ldots,N-1$.

The basis functions $\phi_j:\T\to\R$ are defined by
\begin{align*}
  \phi_j(\omega) &= \left\{\begin{array}{ll}
	\frac{\omega-\omega_{j-1}}{\omega_j-\omega_{j-1}}
	&\quad\mbox{for }\omega\in[\omega_{j-1},\omega_j], \\
	\frac{\omega_{j+1}-\omega}{\omega_{j+1}-\omega_{j}}
	&\quad\mbox{for }\omega\in[\omega_{j},\omega_{j+1}], \\
  0 &\quad\mbox{otherwise},
	\end{array}\right. \qquad j=1,\ldots,N-1, \\
	\phi_N(\omega) &= \left\{\begin{array}{ll}
	\frac{\omega_1-\omega}{\omega_1} &\quad\mbox{for }\omega\in[0,\omega_1], \\
	\frac{\omega-\omega_{N-1}}{M-\omega_{N-1}} 
	&\quad\mbox{for }\omega\in[\omega_{N-1},M], \\
	0 &\quad\mbox{otherwise}.
	\end{array}\right.
\end{align*}
This set of piecewise linear functions is supplemented by the following 
piecewise quadratic basis functions:
$$
  \phi_{N+j}(\omega) = \left\{\begin{array}{ll}
	1-\big(\frac{2\omega-(\omega_{j-1}+\omega_j)}{\omega_j-\omega_{j-1}}\big)^2
	&\quad\mbox{for }\omega\in[\omega_{j-1},\omega_j], \\
	0 &\quad\mbox{otherwise},
	\end{array}\right. \qquad j=1,\ldots,N.
$$
The ansatz space is the set of all positive, piecewise quadratic functions
$g:[0,M]\to\R_+$ of the form 
\begin{equation}\label{disc.gg}
  g(\omega)=\sum_{j=1}^{2N}g_j \phi_j(\omega).
\end{equation}
We call $\mathbf{g}:=(g_1,\ldots,g_{2N})\in[0,\infty)^{2N}$ 
the associated weight vector. 
By definition of $\phi_j$, we have $g(\omega_j)=g_j$ for $j=1,\ldots,N$.
Moreover, as $g$ is point-symmetric, $g_0=g_N$. 

Now, for given mass $M>0$ and grid $\Omega_N\subset[0,M]$, we define the
set ${\mathbb G}^N_M\subset\R_+^{2N}$ as the set of weight vectors $\mathbf{g}$
for which the associated interpolation $g$ from \eqref{disc.gg}
satisfies the mass constraint,
\begin{equation}\label{disc.mass}
  1 = \int_0^M g(\omega)\, d\omega = \sum_{j=1}^N \left(\frac{g_{j-1}+g_j}{2}
	+ \frac23 g_{N+j}\right)(\omega_j-\omega_{j-1}).
\end{equation}

The Wasserstein metric for functions approximated in this way becomes
\begin{align*}
  W[u,u^*]^2 &= \int_0^M\int_0^M(M-\max\{\eta,\eta'\})
	\sum_{i=1}^{2N}(g_i-g_i^*)\phi_i(\eta)
	\sum_{j=1}^{2N}(g_j-g_j^*)\phi_j(\eta')\, d\eta\,d\eta' \\
	&= \sum_{i,j=1}^N (g_i-g_i^*)(g_j-g_j^*)a_{ij}
	+ \sum_{i,j=1}^N (g_{N+i}-g_{N+i}^*)(g_j-g_j^*)b_{ij} \\
	&\phantom{xx}{}+ \sum_{i,j=1}^N (g_i-g_i^*)(g_{N+j}-g_{N+j}^*)b_{ji}
	+ \sum_{i,j=1}^N (g_{N+i}-g_{N+i}^*)(g_{N+j}-g_{N+j}^*)c_{ij}, 
\end{align*} 
where 
\begin{align}
  a_{ij} &= \int_0^M\int_0^M(M-\max\{\eta,\eta'\})\phi_i(\eta)\phi_j(\eta')
	\,d\eta\,d\eta', \nonumber \\
	b_{ij} &= \int_0^M\int_0^M(M-\max\{\eta,\eta'\})
	\phi_{N+i}(\eta)\phi_j(\eta')\,d\eta\,d\eta', \label{disc.abc} \\
	c_{ij} &= \int_0^M\int_0^M(M-\max\{\eta,\eta'\})
	\phi_{N+i}(\eta)\phi_{N+j}(\eta')\,d\eta\,d\eta'. \nonumber
\end{align}
The coefficients $a_{ij}$, $b_{ij}$, and $c_{ij}$ can be computed
explicitly. The explicit expressions are given in Appendix \ref{sec.coeff}.
Setting $A=(a_{ij})$, $B=(b_{ij})$, $C=(c_{ij})$, and defining the matrix
\begin{equation}\label{disc.M}
  M_w = (M_{ij}) = \begin{pmatrix} A & B^\top \\ B & C \end{pmatrix},
\end{equation}
we can formulate the above sum as
$$
  W[u,u^*]^2 = \sum_{i,j=1}^{2N} M_{ij}(g_i-g^*_i)(g_j-g^*_j).
$$
As the matrices $A$ and $C$ are symmetric, $M_w$ is symmetric, too. The matrix $A$
corresponds to the linear approximation considered in \cite{DMM10}.


\subsection{Minimization}\label{sec.min}

The numerical scheme consists of the following finite-dimensional variational
problem:
\begin{align}
  \mathbf{g}^{n+k} &= \argmin_{\mathbf{g}\in{\mathbb G}^N_M}
	\Psi_N(\mathbf{g}), \quad \label{disc.gfin} \\
\mbox{where}\quad	\Psi_N(\mathbf{g}) 
	&= -\frac{1}{2\tau}\sum_{\ell=0}^{k-1}a_\ell\sum_{i,j=1}^{2N} M_{ij}
	(g_i-g^{n+\ell}_i)(g_j-g^{n+\ell}_j) + S_N[\mathbf{g}], \nonumber
\end{align}
and where $\mathbf{g}=(g_1,\ldots,g_{2N})$,
\begin{equation}\label{disc.SN}
  S_N[\mathbf{g}] = \frac{1}{\alpha(\alpha-1)}
	\sum_{i=0}^{N-1}\int_{\omega_{i}}^{\omega_{i+1}}
	\big(g_i\phi_i + g_{i+1}\phi_{i+1} +
        g_{N+i}\phi_{N+i}\big)^{1-\alpha}\,  d\omega.
\end{equation}
The functions $\Psi_N(\mathbf{g})$ and $\Psi(g)$ from 
\eqref{disc.Psi} are related by $\Psi_N(\mathbf{g})=\Psi(g)$
with a piecewise quadratic function $g$ defined
from $\mathbf{g}$ by \eqref{disc.gg}.
Since $\Psi_N$ is convex for $\alpha<0$ and the set ${\mathbb G}^N_M$ is convex,
there exists a unique minimizer of \eqref{disc.gfin}.


\subsection{Fully discrete Euler-Lagrange equations}\label{disc.EL}

The minimizer $\mathbf{g}_{n+k}$ in \eqref{disc.gfin} is subject to the
mass constraint \eqref{disc.mass}, by definition of the set $\mathbb{G}_M^N$.
Therefore, instead of working on the set $\mathbb{G}^N_M$, it is more convenient
to consider \eqref{disc.gfin} as a constrained minimization problem for
$\mathbf{g}$ on the larger set $\R^{2N}$, which is solved by the method
of Lagrange multipliers $\lambda$ using the Lagrange functional
$$
  L(\mathbf{g},\lambda) = \Psi_N(\mathbf{g})
	- \lambda\bigg(1-\sum_{j=1}^{N}\bigg(\frac{g_{j-1}+g_j}{2} 
	+ \frac23 g_{N+j}\bigg)(\omega_{j}-\omega_{j-1})\bigg).
$$
A critical point of $L$ satisfies the $2N$ conditions
$$
  0 = \mathbf{G}_j := \frac{\pa L}{\pa g_j}
	= -\frac{1}{\tau}\sum_{\ell=0}^{k-1}a_\ell\sum_{i=1}^{2N} M_{ij}(g_i-g_i^{n+\ell}) 
	+ \frac{\pa S_N}{\pa g_j}, \quad j=1,\ldots,2N.
$$
The precise values for $\pa S_N/\pa g_j$ are given in Appendix~\ref{sec.hess}
for $\alpha=-1$. The condition for the constraint is recovered from
$$
  0 = \mathbf{G}_{2N+1} := \frac{\pa L}{\pa\lambda}
	= 1-\sum_{j=1}^{N}\bigg(\frac{g_{j-1}+g_j}{2} 
	+ \frac23 g_{N+j}\bigg)(\omega_{j}-\omega_{j-1}).
$$
The vector $\mathbf{G}[\mathbf{g},\lambda]
=(\mathbf{G}_1,\ldots,\mathbf{G}_{2N+1})^\top\in\R^{2N+1}$
is the gradient of $L(\mathbf{g},\lambda)$ with respect to
$(\mathbf{g},\lambda)$. We approximate a critical point numerically by
applying the Newton method to the first-order optimality condition
$\mathbf{G}[\mathbf{g},\lambda]=0$. 
This leads to a sequential quadratic programming method, since at every 
Newton iteration step a quadratic subproblem has to be solved.


\subsection{Implementation}\label{sec.impl}

Let the solution $\mathbf{g}$ at the $n$th time step be given and let
$\mathbf{g}^{(0)}:=\mathbf{g}$, $\lambda^{(0)}:=0$. The iteration
is as follows:
$$
  \mathbf{g}^{(s+1)} := \mathbf{g}^{(s)} + (\delta\mathbf{g})^{(s+1)}, \quad
  \lambda^{(s+1)} := \lambda^{(s)} + (\delta\lambda)^{(s+1)},
$$
where $((\delta\mathbf{g})^{(s+1)},(\delta\lambda)^{(s+1)})$ is the solution
to the linear system
$$
  H[\mathbf{g}^{(s)},\lambda^{(s)}]
	((\delta\mathbf{g})^{(s+1)},(\delta\lambda)^{(s+1)})^\top
	= -\mathbf{G}[\mathbf{g}^{(s)},\lambda^{(s)}],
$$
where $H[\mathbf{g}^{(s)},\lambda^{(s)}]$ denotes the Hessian of
$\Psi_N$, whose entries are given in Appendix \ref{sec.hess} for $\alpha=-1$. 
The iteration is stopped if the norm of 
$((\delta\mathbf{g})^{(s+1)},(\delta\lambda)^{(s+1)})$ is smaller
than a certain threshold (see Section \ref{sec.num} for details). 
In this case, we define
$\mathbf{g}^{n+1}:=\mathbf{g}^{(s+1)}$ and $\lambda^{n+1}:=\lambda^{(s+1)}$
at the $(n+1)$th time step.
For the BDF-$k$ scheme, the values $\mathbf{g}^1,\ldots,\mathbf{g}^{k-1}$ are 
computed from a lower-order scheme. In the numerical section below, we employ the
BDF-2 scheme only such that $\mathbf{g}^1$ is calculated by the implicit
Euler method.

Note that the constrained minimization problem is exactly mass conserving 
by construction, but the Newton iteration introduces a small error which depends
on the tolerance imposed in the Newton method.

In each iteration step, we need to invert the dense matrix $H$ which is the sum
of $M_w$ and the Hessian of $S_N$. This is not a numerical challenge in the
one-dimensional case we consider but it may become critical in multi-dimensional
discretizations on fine grids. For $\alpha=-1$, however, $M_w$ and the Hessian
of $S_N$ are constant matrices which significantly simplifies the Newton scheme.


\subsection{Choice of the initial condition}\label{sec.init}

In order to compute the initial condition in Lagrangian coordinates, we need 
to make precise the values $g_i^0$ of the vector $\mathbf{g}\in\mathbb{G}_M^N$
and the points $x^0_j$ of the spatial lattice which is moving as the
solution evolves. Let the mesh $\{x_0^0,\ldots,x_N^0\}$ be given and set
$g^0(\omega_j)=1/u^0(x_j)$. Approximating the initial function by a linear ansatz
function, we obtain
\begin{equation}\label{disc.x0}
  x_j^0 = G^0(\omega_j) 
	= \frac12\sum_{i=1}^j (\omega_j-\omega_{j-1})(g^0_{i-1}+g^0_{i}), \quad
	j=1,\ldots,N.
\end{equation}
This is a system of linear equations in $\omega_1,\ldots,\omega_N$. Choosing
the uniform grid $x_j^0=j/N$, we can solve this system explicitly. Indeed,
since
$$
  \frac{1}{N} = x_{j+1}^0-x_j^0 = \frac12(\omega_{j+1}-\omega_j)(g^0_{j+1}+g^0_{j}),
$$
which can be solved for $\omega_{j+1}$:
$$
  \omega_{j+1} = \omega_j + \frac{2}{N}(g^0_{j+1}+g^0_{j})^{-1}, \quad
	j=0,\ldots,N-1.
$$
As $u^0$ is assumed to be point-symmetric, this is true for $(\omega_i)$ too.
Finally, we approximate $g^0(\omega_{j+1/2})$ by the arithmetic mean
$\frac12(g^0_{j-1}+g^0_j)$, $j=1,\ldots,N$. Consequently, the weights $g_{N+i}^0$
vanish for $i=1,\ldots,N$ in the expansion $g^0(\omega)=\sum_{j=0}^{2N}g_j^0\phi_j$,
which is consistent with our approximation \eqref{disc.x0}.
We also refer to the discussion in \cite[Section~2.8]{DMM10}.


\section{Numerical experiments}\label{sec.num}

In this section, we present some numerical results for \eqref{1.eq} with $\alpha=-1$,
by employing the BDF-2 method with quadratic ansatz functions.
We choose a uniform grid for $x\in[0,1]$ with $N=100$ grid points, and
the time step size $\tau=10^{-5}$. The Newton iterations are stopped if both 
the relative $\ell^\infty$ error in the $g$-variables
and the $\ell^2$ norm of $\mathbf{G}[\mathbf{g}^{(s)},\lambda^{(s)}]$ 
are smaller than $10^{-8}$.

Figure \ref{fig.evol} illustrates the temporal evolution of the solution $u(x,t)$
with the initial conditions $u^0(x) = \cos(2\pi x)^2 + 0.01$ (left figure)
and $u^0(x) = \sqrt[5]{|x-0.5|+0.0001}-0.1$ (right figure).
We observe that, as expected, the solutions converge to the constant steady state.
Because of the negative exponent $\alpha$, very small initial values increase
quickly in time.

\begin{figure}[ht]
\includegraphics[width=75mm]{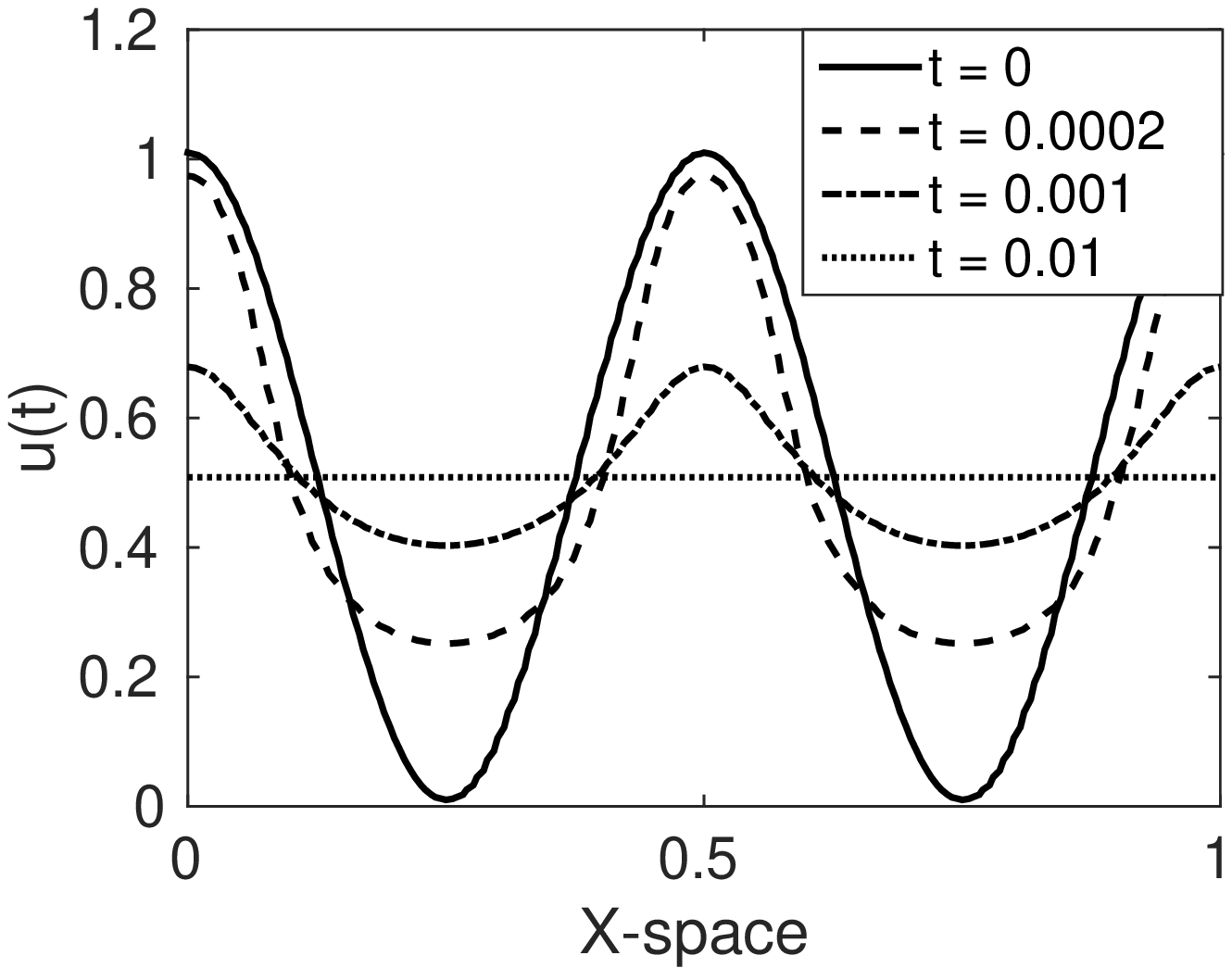}
\includegraphics[width=75mm]{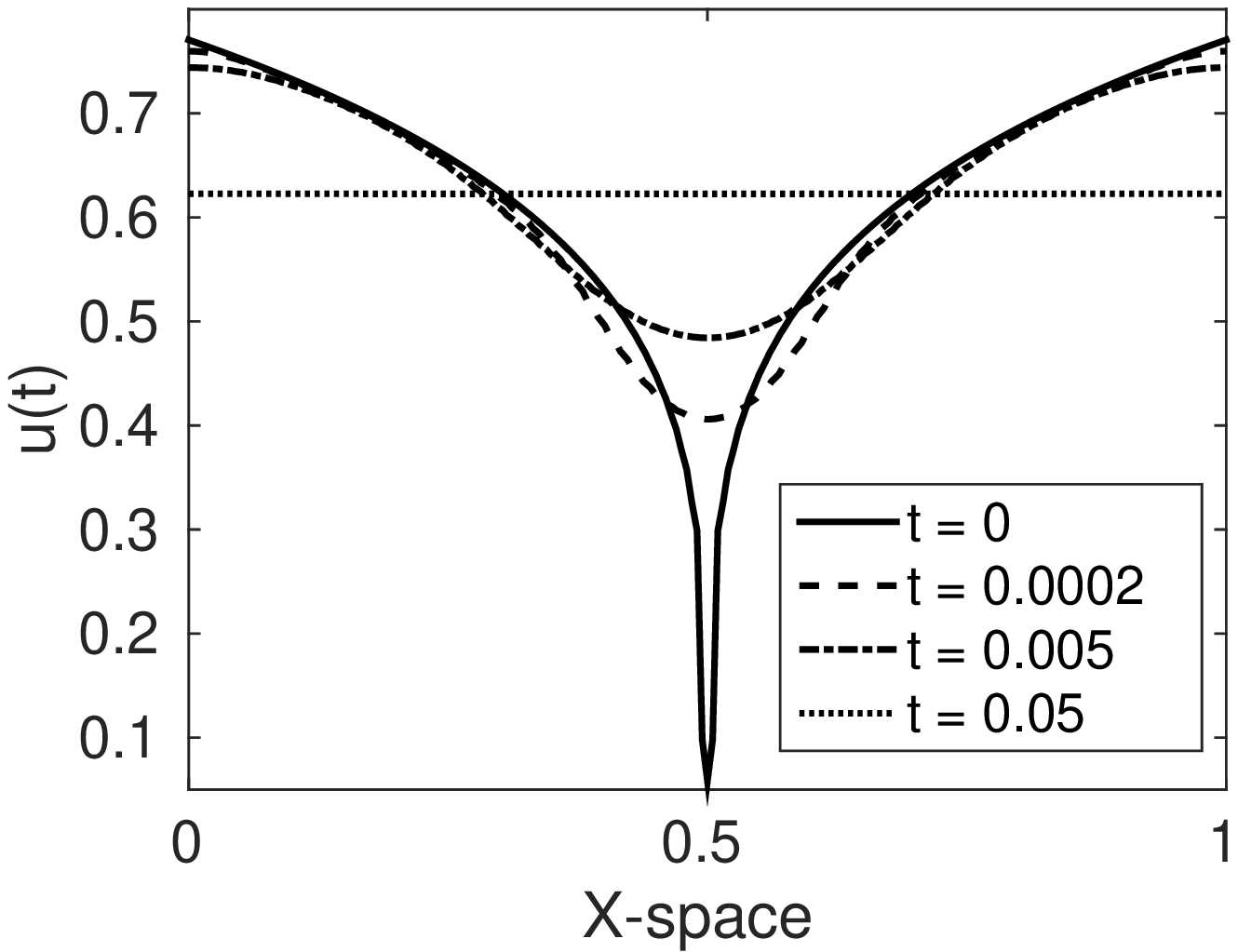}
\caption{Time evolution of the solution to the diffusion equation \eqref{1.eq}
with $\alpha=-1$ for two different initial conditions.}
\label{fig.evol}
\end{figure}

A nice feature of the Wasserstein gradient flow scheme is that we may interpret
the evolution as a process of redistribution of particles with spatio-temporal
density $u(x,t)$ on $\T$ under the influence of a nonlinear particle interaction,
which is described by $S$. The way in which
the initial density $u^0$ is ``deformed'' during the time evolution is illustrated
in Figure \ref{fig.pt}. We have chosen 50 ``test particles'' for the solutions
to \eqref{1.eq} for the initial conditions chosen above. 
We stress the fact that the density of trajectories can generally be not
identified with the density $u$ of the solution.

\begin{figure}[ht]
\includegraphics[width=75mm]{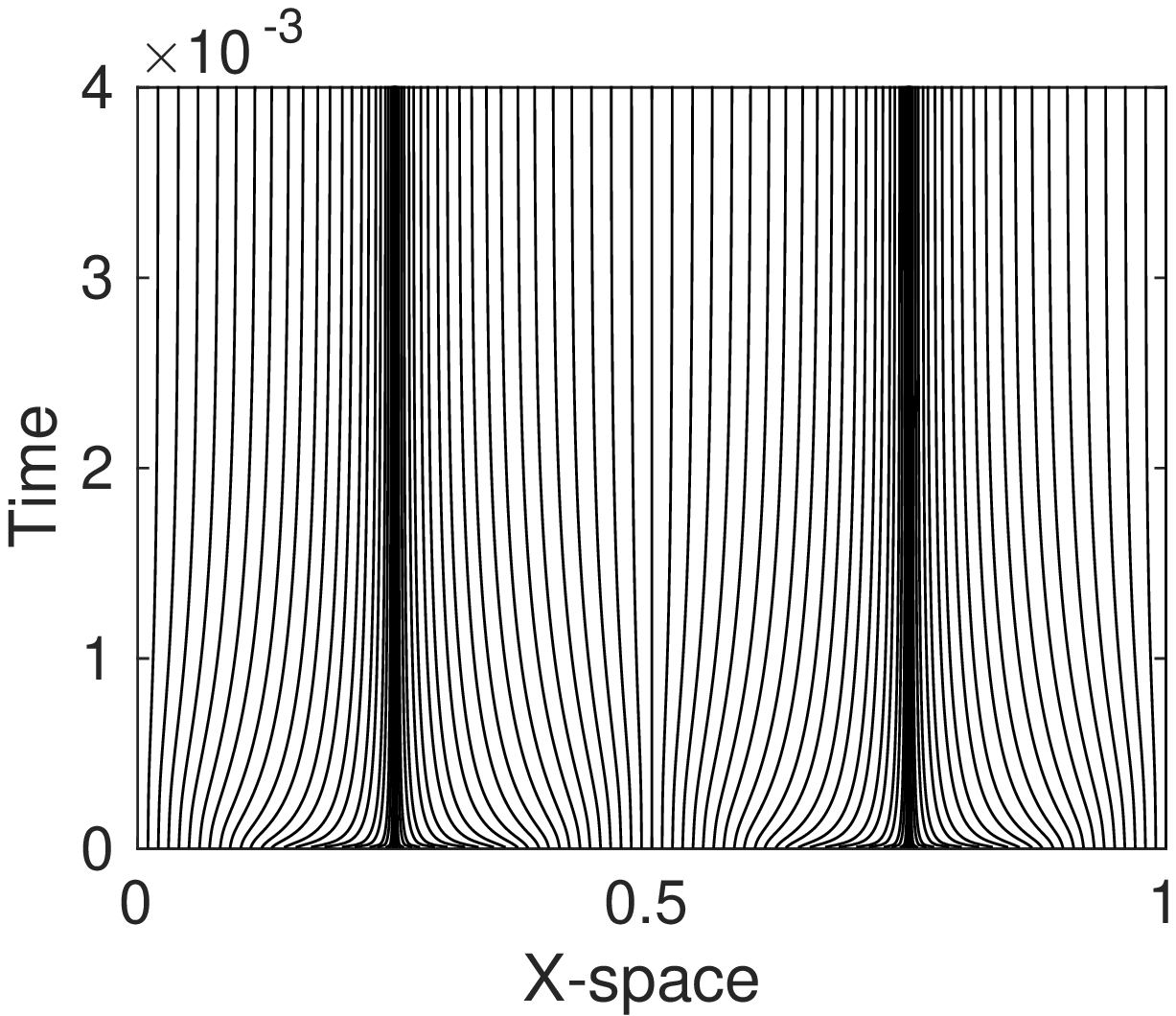}
\includegraphics[width=75mm]{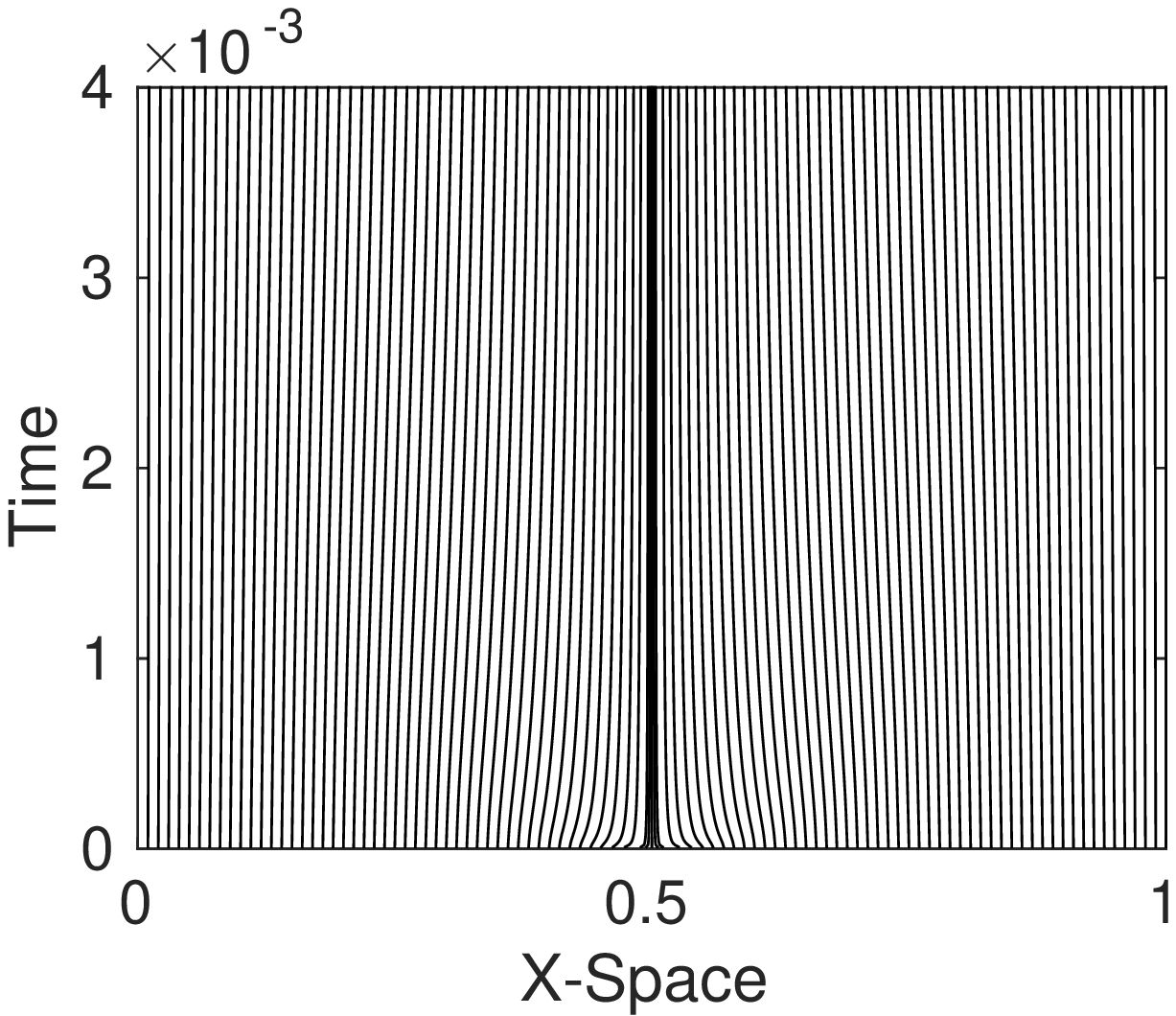}
\caption{Particle trajectories in the Wasserstein gradient flow scheme,
corresponding to the solutions of Figure \ref{fig.evol} with $N=50$.}
\label{fig.pt}
\end{figure}

We verify that the discretization is indeed of second order.
Figure \ref{fig.err1} shows the $\ell^\infty$-error 
for various numbers of grid points $N$.
We have chosen the initial datum $u_0(x)=\cos(2\pi x)^2 + 0.1$, 
the end time $T=0.004$, and the time step size $\tau=10^{-7}$.
The reference solution is computed by using $N=500$, and $\tau=10^{-7}$.
The differences $g(\cdot,T)-g_{\rm ref}(\cdot,T)$ and
$u(\cdot,T)-u_{\rm ref}(\cdot,T)$ in the $\ell^\infty$ norm
feature the expected second-order dependence on $N$.

\begin{figure}[ht]
\includegraphics[width=75mm]{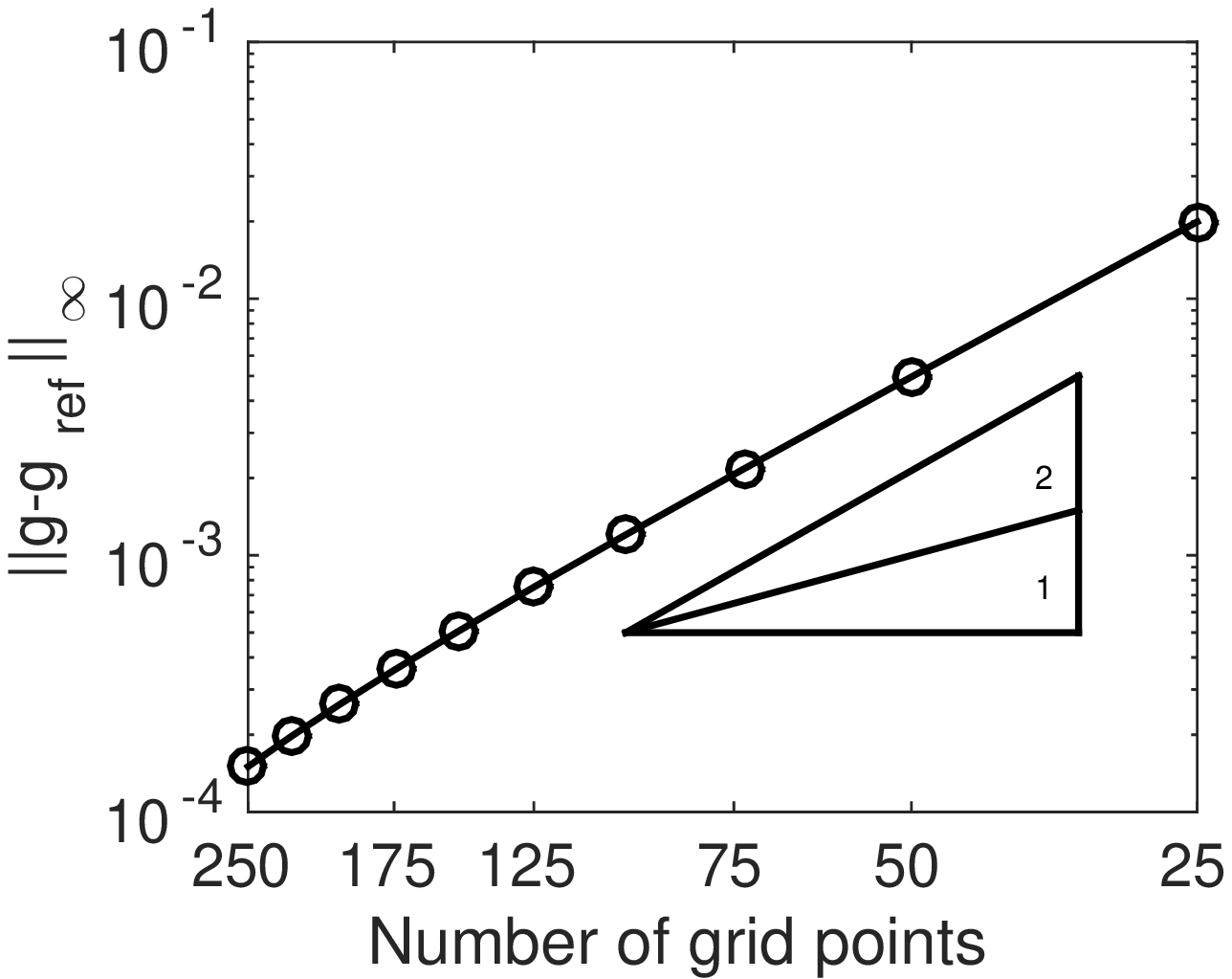}
\includegraphics[width=75mm]{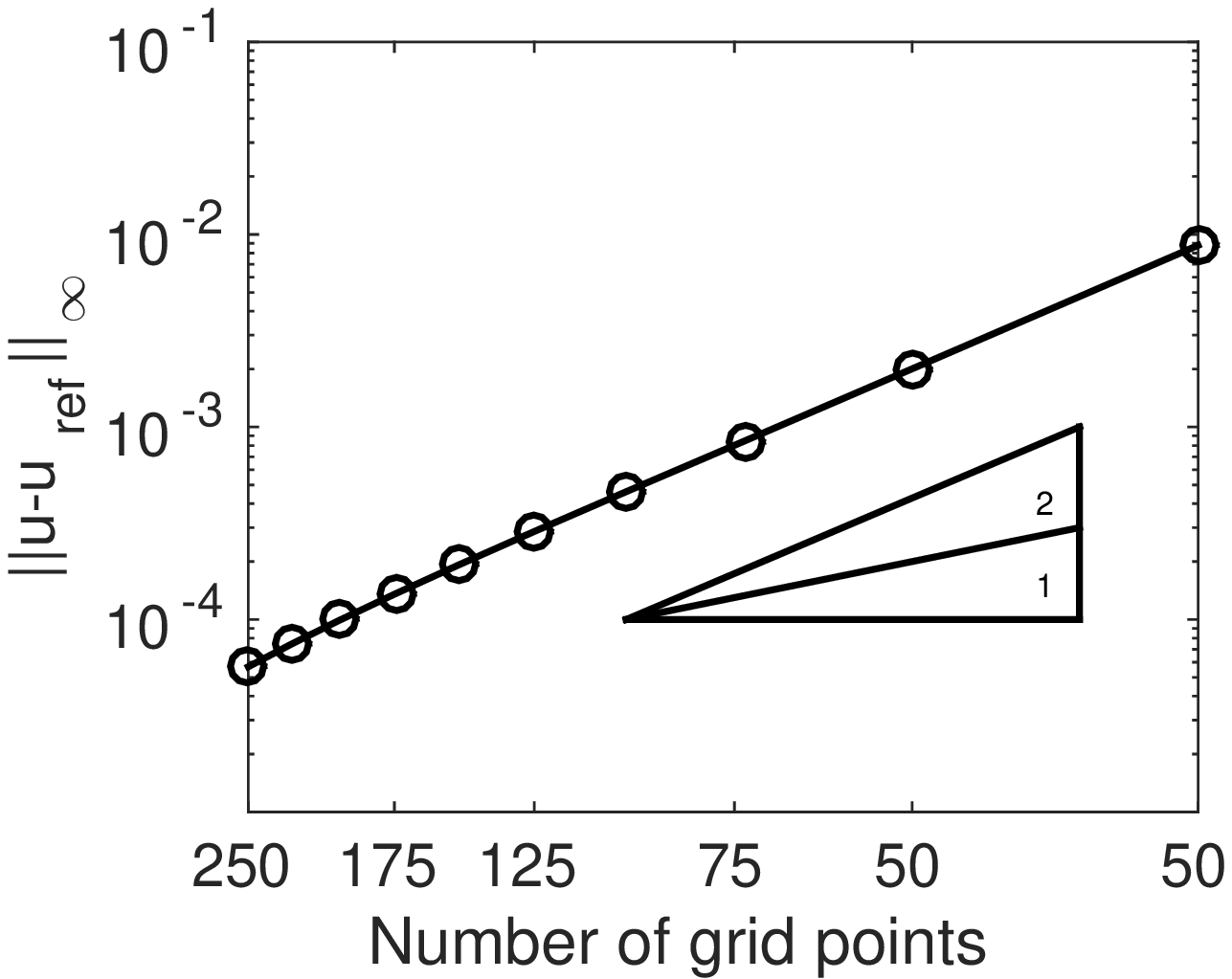}
\caption{$\ell^\infty$-error for $(g-g_{\rm ref})(T)$ (left) and 
$(u-u_{\rm ref})(T)$ (right) at $T=0.004$ for various numbers of grid points.}
\label{fig.err1}
\end{figure}

Next, we fix the number of grid points $N=100$ and compute the 
$L^\infty(\tau^*,T;$ $L^2(\T))$ 
error for varying time step sizes $\tau$; see Figure \ref{fig.err2}. 
Because of the approximation
of the initial datum as detailed in Section \ref{sec.init}, the error will
be not of second order initially. Therefore, we compute the error 
in the interval $(\tau^*,T)$ with $\tau^*=10^{-4}$. 
The errors are of second order, as expected.

\begin{figure}[ht]
\includegraphics[width=75mm]{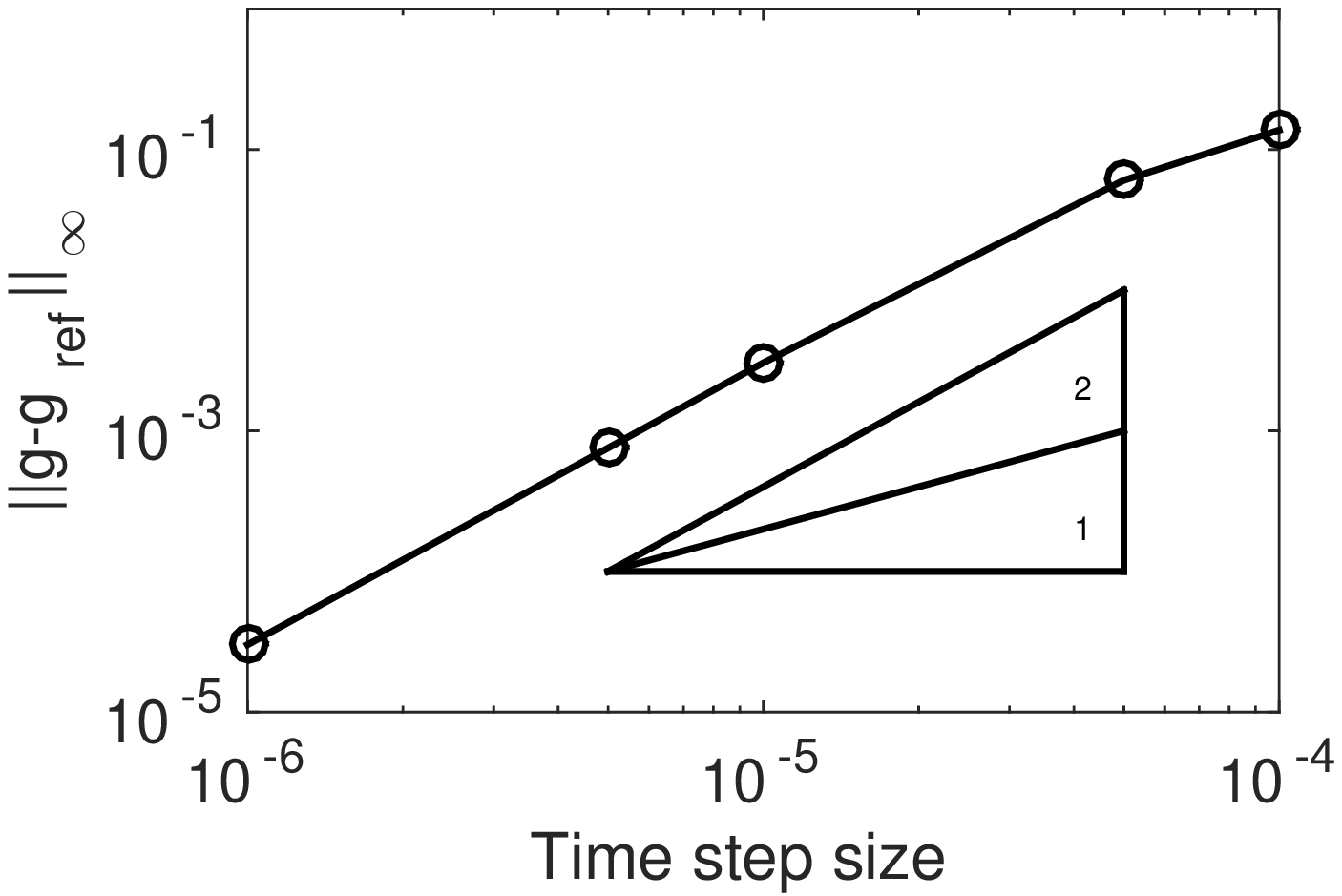}
\includegraphics[width=75mm]{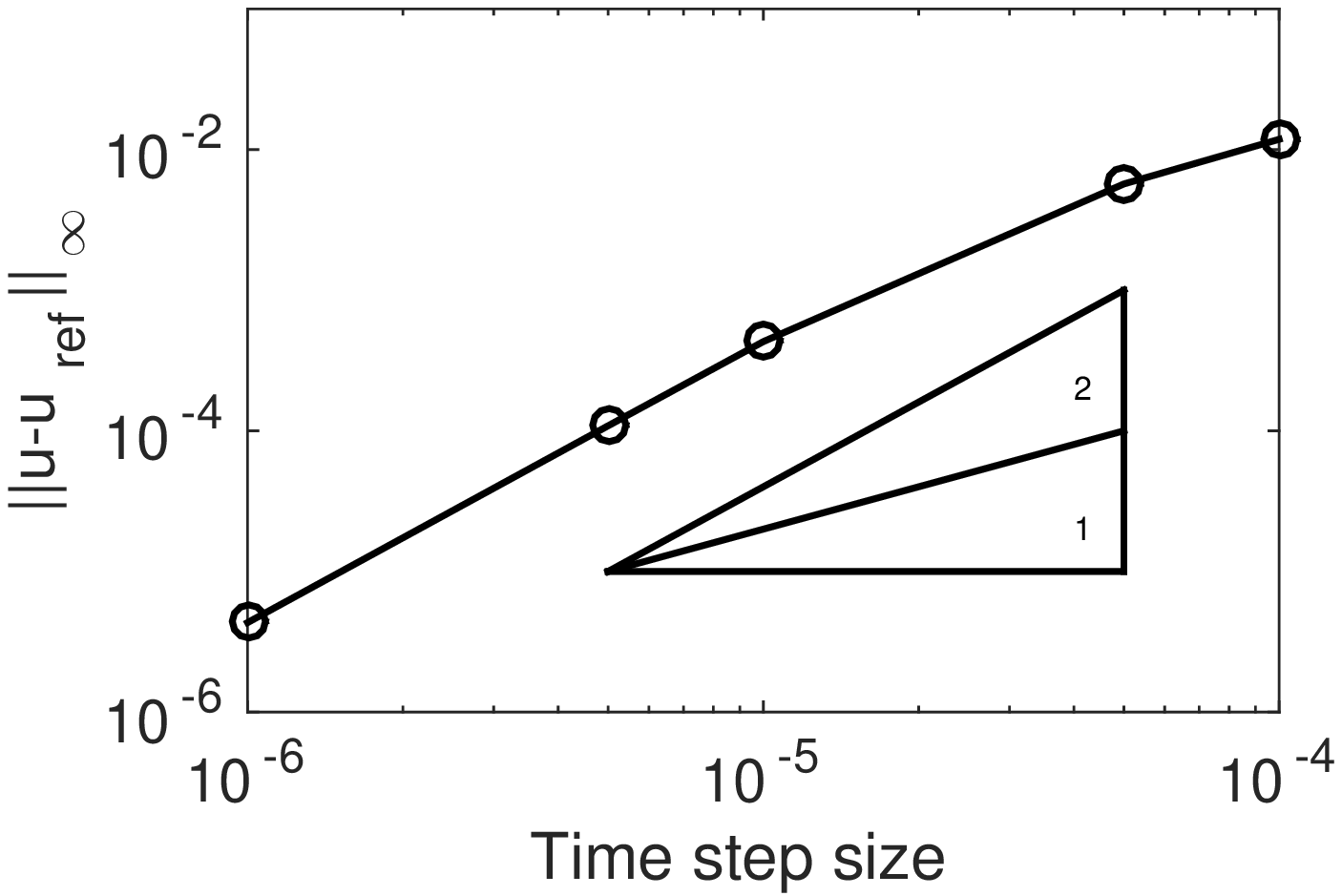}
\caption{$\ell^\infty(\tau^*,T;\ell^2(\T))$-error for $g-g_{\rm ref}$ (left) and 
$u-u_{\rm ref}$ (right) for various time step sizes, with $\tau^*=10^{-4}$.}
\label{fig.err2}
\end{figure}

The time decay of the discrete version of the relative entropy $S[u]-S[u_\infty]$
is presented in Figure \ref{fig.ent} (left) for various grid numbers. We observe
that the decay is exponential until saturation. The
saturation comes from the spatial error and the error from the Newton iteration.
The decay rate is estimated in the linear regime from the difference quotient
$$
  \lambda \approx \frac{1}{\tau}\big(\log S[u(t+\tau)]-\log S[u(t)]\big).
$$

\begin{figure}[ht]
\centering\includegraphics[width=75mm]{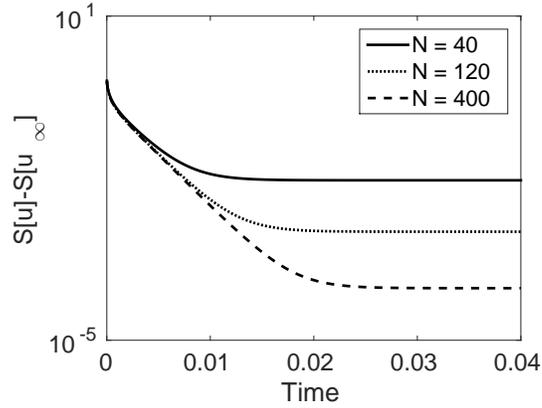}
\caption{Discrete relative entropy $S[u(t)]-S[u_\infty]$ versus time
with $\alpha=-1$ for various $N$. 
}
\label{fig.ent}
\end{figure}

The numerical decay rates for $\alpha=-1,-2$ are shown in Figure \ref{fig.ent2}.
The rate for $\alpha=-2$ (right) is much larger than the corresponding one for
$\alpha=-1$ (left), since a smaller exponent yields a larger diffusion
coefficient (if $u<1$) and thus, diffusion becomes faster. 
We also see that the decay rates become larger on a finer spatial grid.
This behavior seems to confirm recent analytical results for spatial discretizations
of Fokker-Planck equations; see \cite[Section 5]{Mie13}. One may ask if a similar 
behavior can be observed for the decay rate as a function of the time step size.
However, our numerical experiments do not show a monotonic dependence
(figures not presented); rather the decay rates vary in a small range 
which seems to be determined by the other numerical error parts.

\begin{figure}[ht]
\includegraphics[width=75mm]{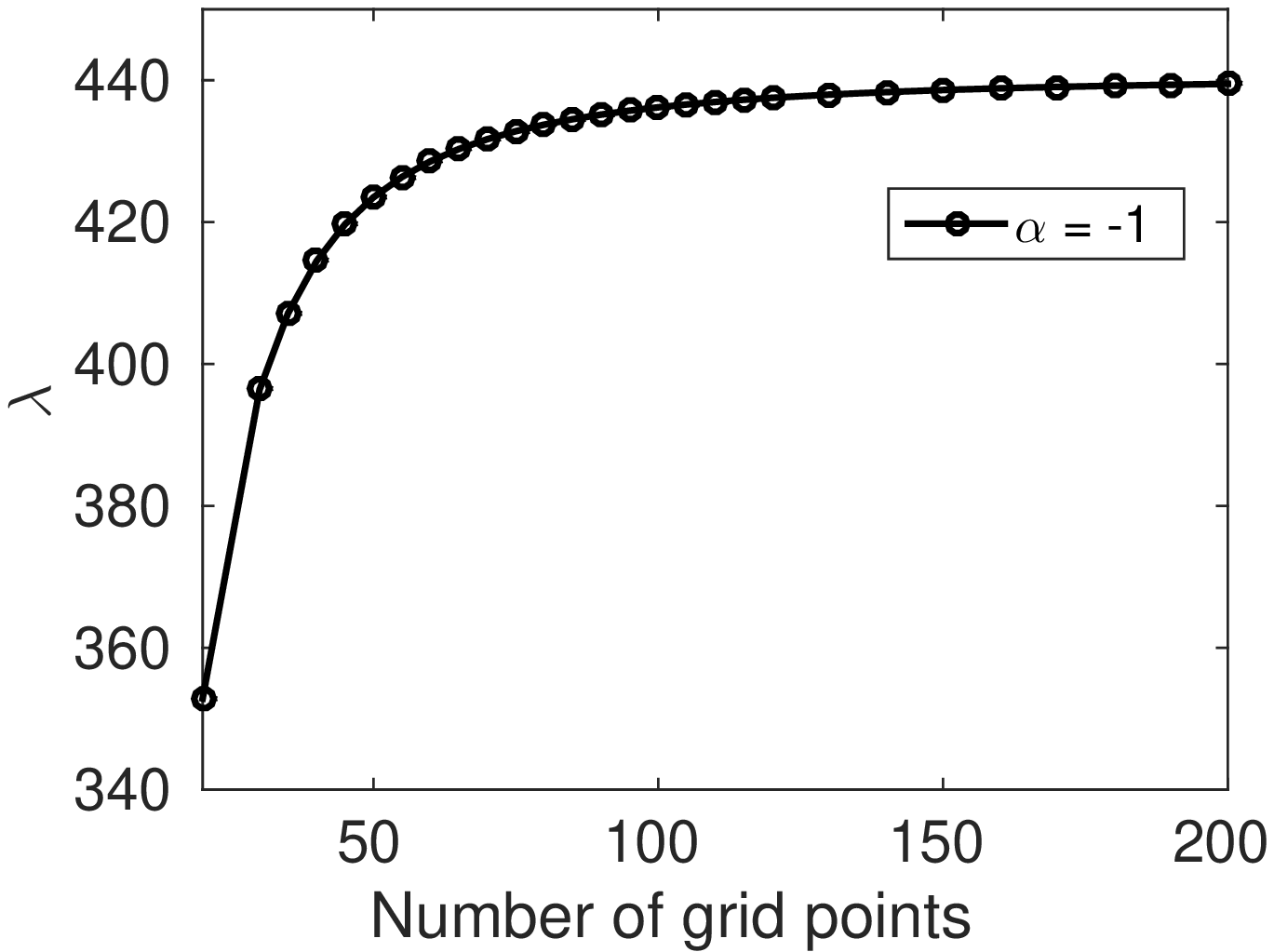}
\includegraphics[width=75mm]{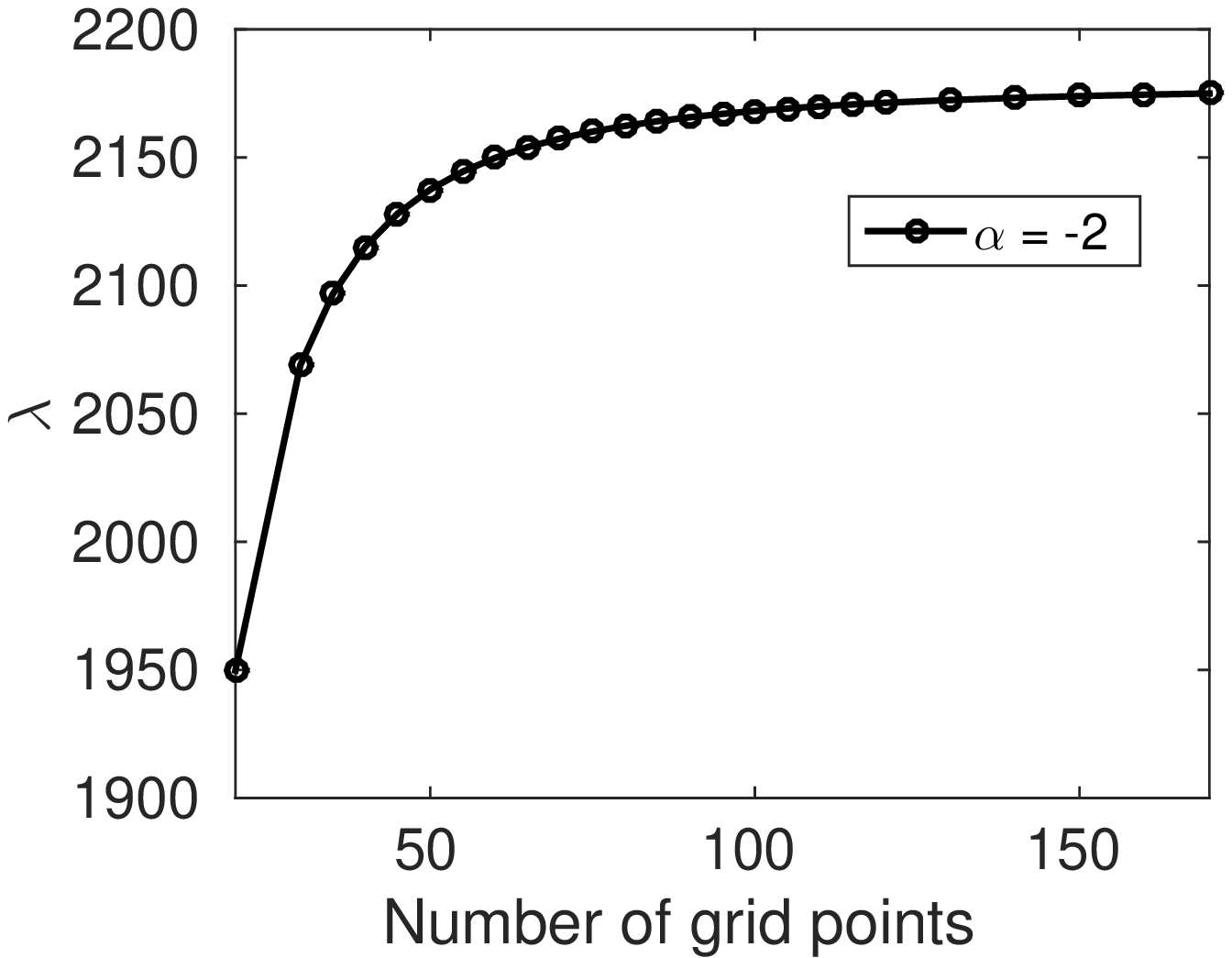}
\caption{Estimated decay rate of the entropy 
versus number of grid points $N$ for $\alpha=-1$ (left) and $\alpha=-2$ (right).}
\label{fig.ent2}
\end{figure}

In Figure \ref{fig.G}, the decay of the square of the relative $G$-norm 
is presented.
The $G$-norm is calculated according to \eqref{1.Gnorm}, where the argument
is given by $\mathbf{g}-\mathbf{g}_\infty$ and $\mathbf{g}_\infty$ is the
weight vector corresponding to the constant steady state. Again, 
the decay is much faster for $\alpha=-2$ because of the faster diffusion.

\begin{figure}[ht]
\includegraphics[width=75mm]{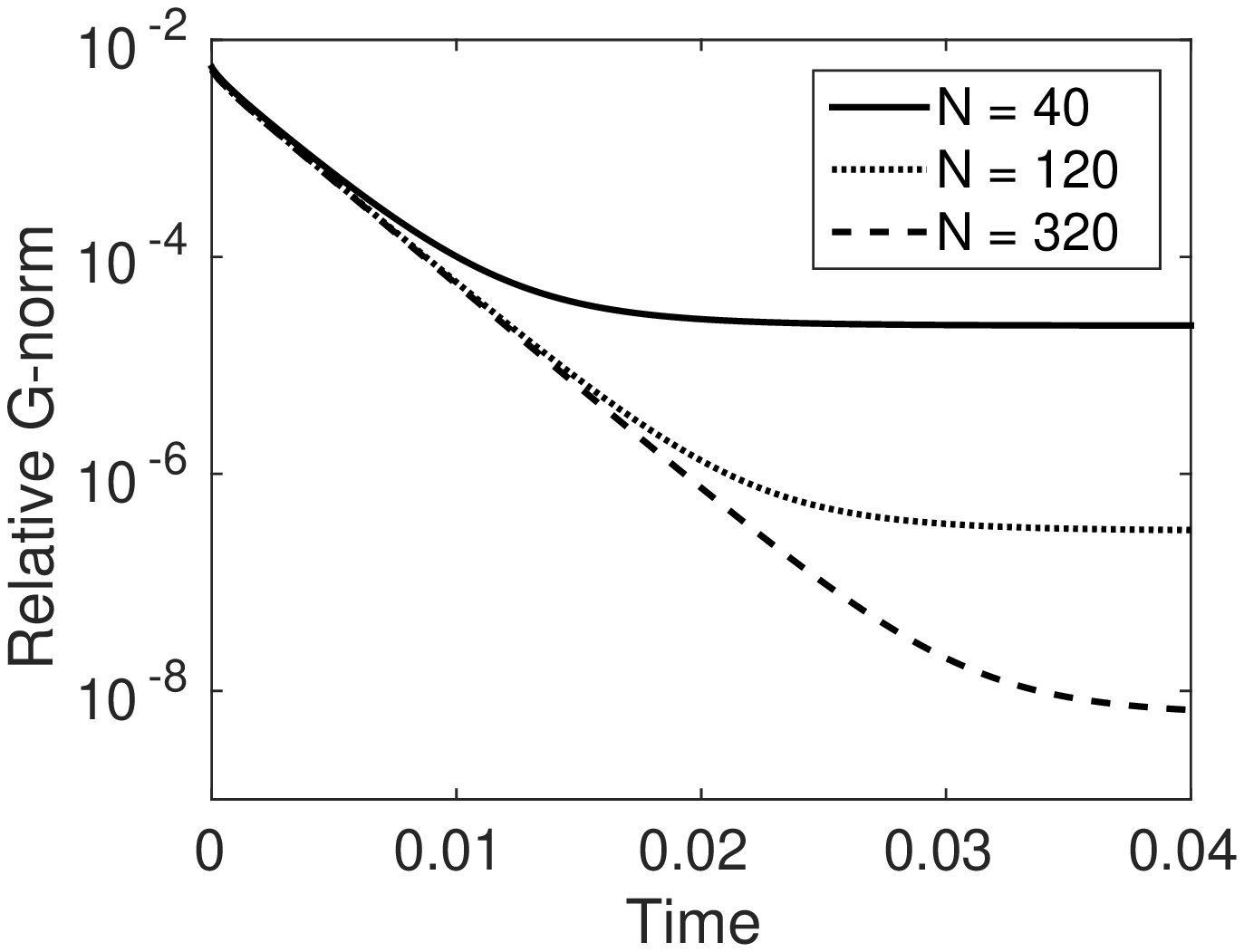}
\includegraphics[width=75mm]{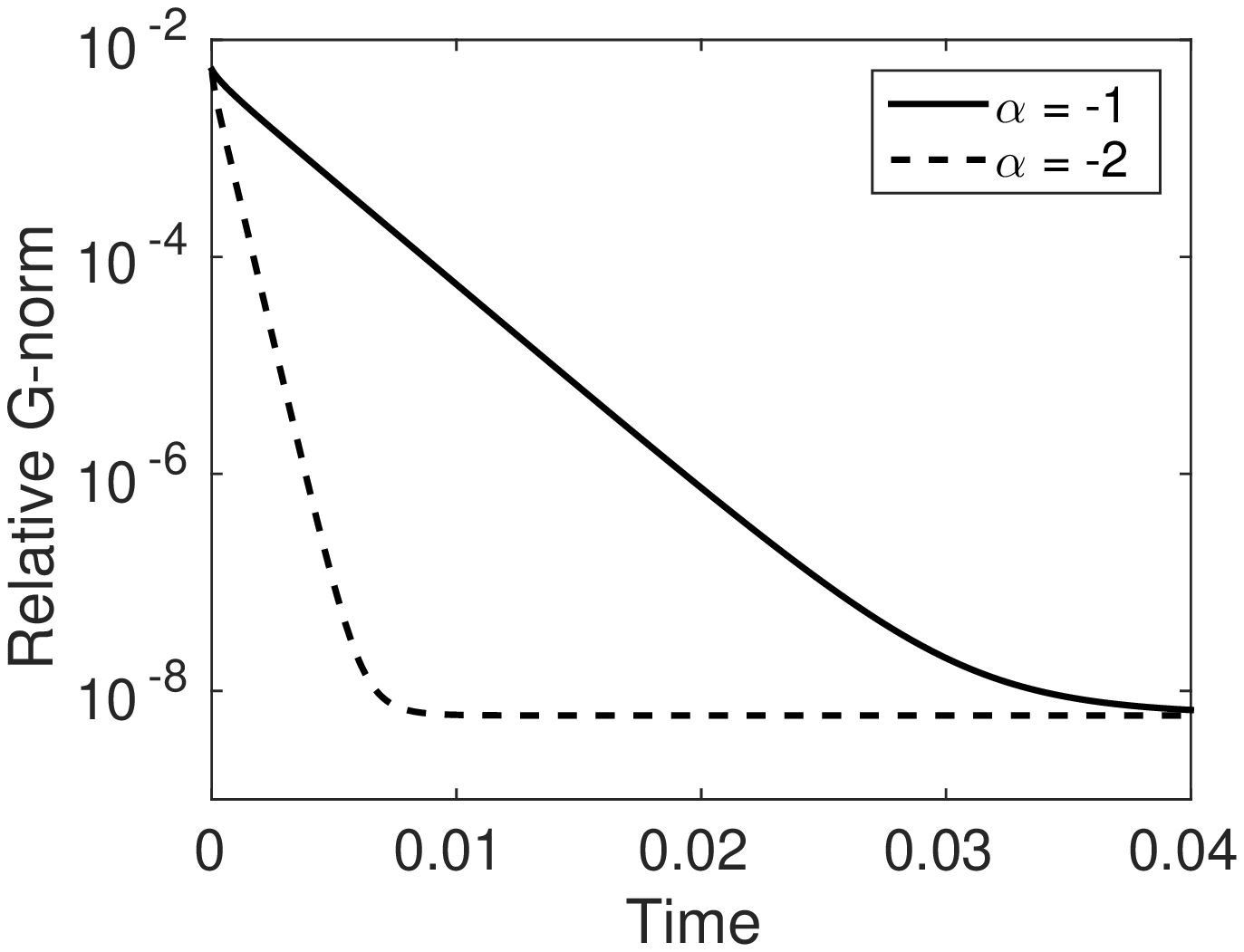}
\caption{Relative $G$-norm versus time for various grid numbers $N$ (left)
and for two values of the exponent $\alpha$ (right).}
\label{fig.G}
\end{figure}

Finally, we present some results on the time decay of the discrete
variance of $u^n$ and $g^n$ at time $\tau n$, defined by 
$$
  \mbox{Var}(u^n)^2 = \sum_{i=1}^{N+1}(u^n_i-E)^2(x_i-x_{i-1}),
$$
where $E$ is the expectation value of $u^n$ (which equals the mass and is
therefore constant in time). The discrete variance of $g^n$ is defined in a 
similar way. Interestingly, the variances are exponentially decaying
(Figure \ref{fig.var}), although it is not clear how to prove this property
analytically.

\begin{figure}[ht]
\includegraphics[width=75mm]{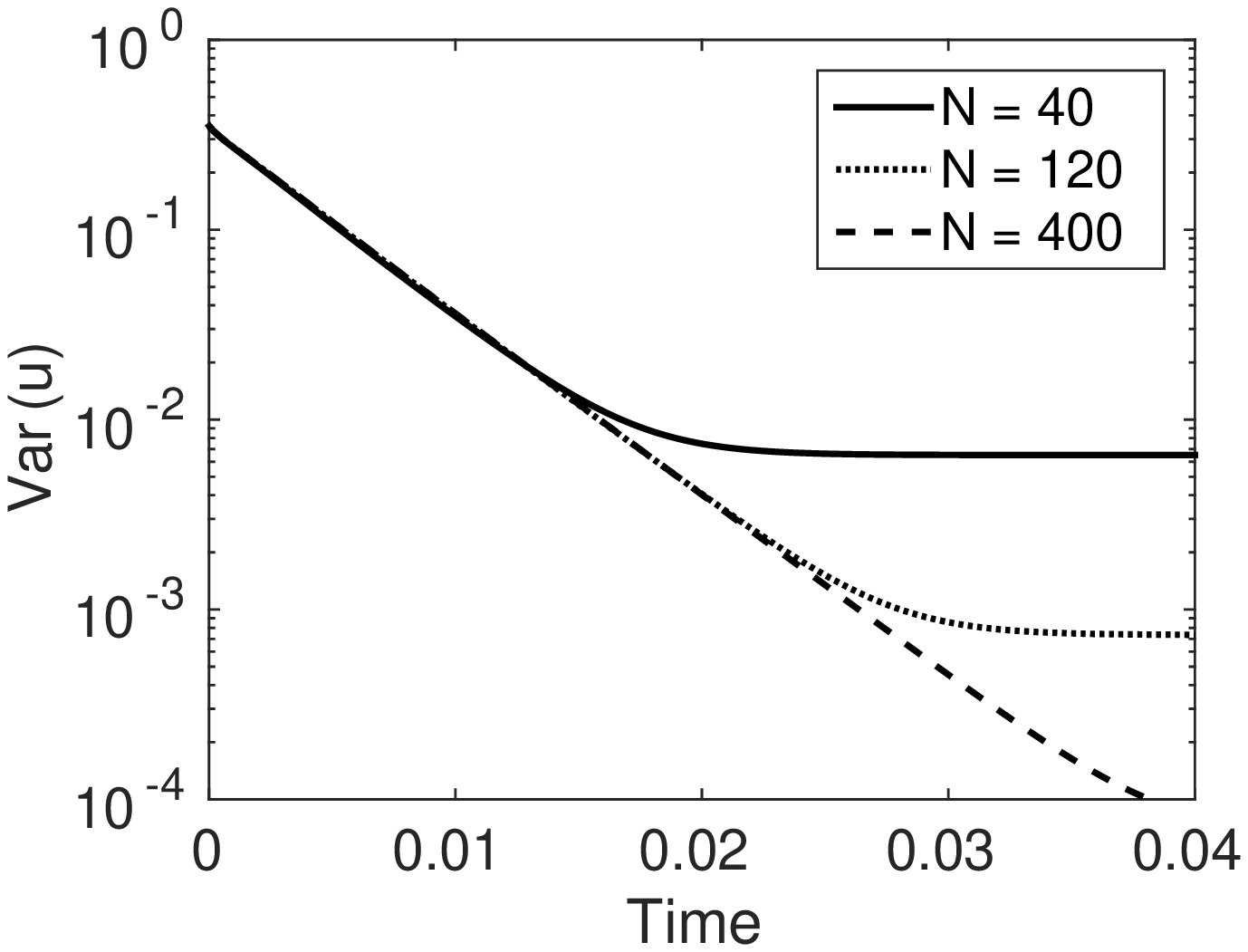}
\includegraphics[width=75mm]{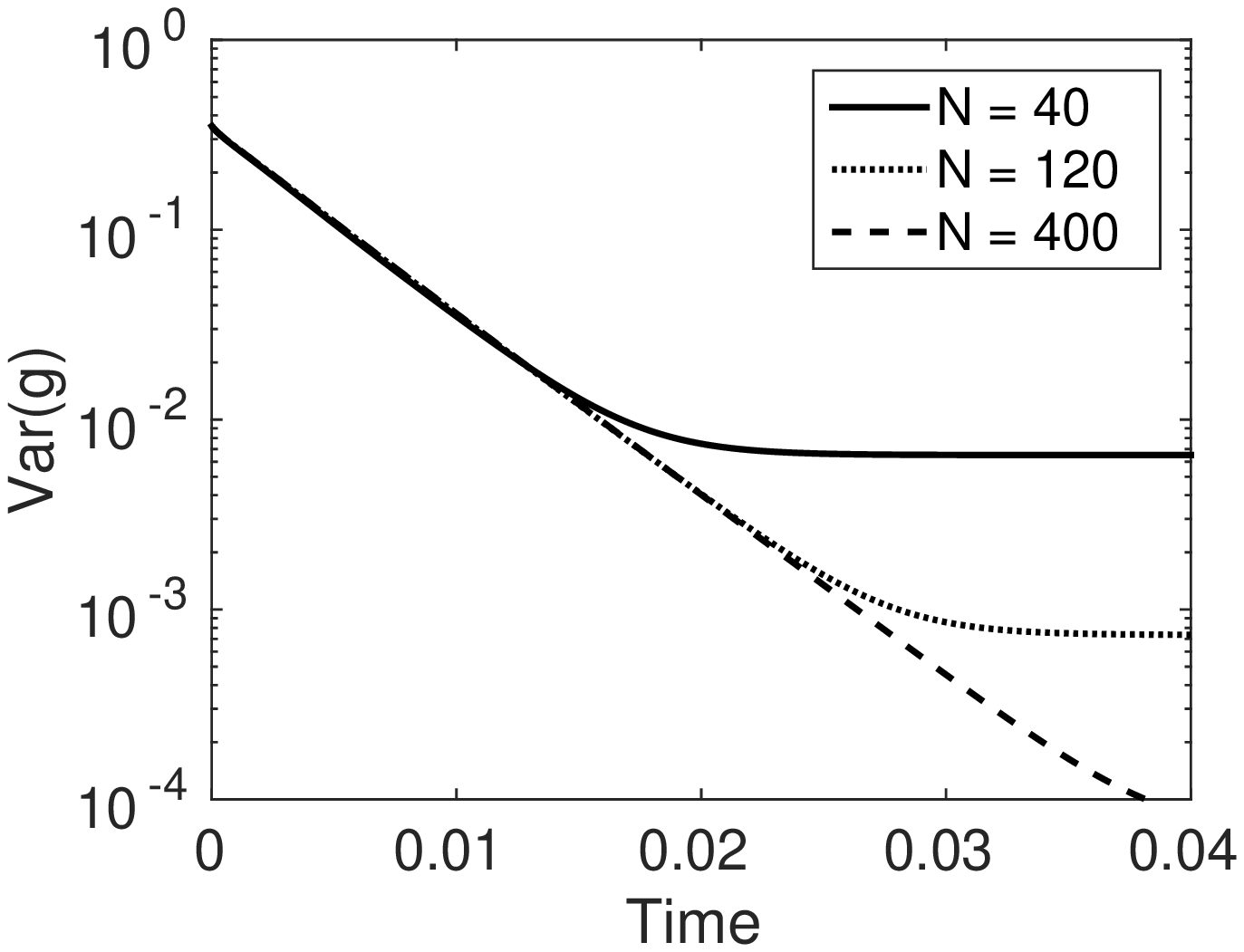}
\caption{Discrete variance of $u^n$ (left) and $g^n$ (right) versus time 
for various grid numbers $N$.}
\label{fig.var}
\end{figure}


\section{Conclusion}
\label{sec.conc}

We have proposed an extension of the standard minimizing movement scheme
to higher-order BDF time discretizations. Quadratic finite elements
have been used to discretise in (mass) space. As an example, we have considered
a (singular) diffusion equation, but it should be possible to adapt the scheme 
to other partial differential equations which constitute $L^2$ Wasserstein
gradient flows. The implicit numerical scheme is based on successive solution
of constrained minimization problems. The quadratic convergence in space and time
of our method has been confirmed numerically. It turns out that the relative
entropy, $G$-norm, and variance converge exponentially to zero. An interesting
observation is that the decay rate depends monotonically on the number 
of grid numbers, but it seems that there is no monotonic behavior with respect to 
the time step size. Future work may be concerned with an analytical derivation of
numerical decay rates using a discrete variant of the Bakry-Emery approach.
First steps in this direction were presented in \cite{Mie13} but only for
a semi-discretization. Possibly this approach has to be adapted to $G$-stable
schemes in the spirit of \cite{JuMi15}.


\begin{appendix}
\section{Proofs of Theorems \ref{thm.ex} and \ref{thm.time}}\label{app.ex}

\begin{proof}[Proof of Theorem \ref{thm.ex}]
The existence proof is based on a regularization of the initial datum and
a fixed-point argument.
Let $0<\eps<1$ and $u_{0,\eps}=u^0+\eps$. Let $Q_T=\T^d\times(0,T)$
and $M=\sup_{\T^d} u^0$, Set 
$$
  K=\{u\in L^2(Q_T):\eps\le u\le M,\ \|u\|_{L^2(0,T;H^1(\T^d))}
	+\|\pa_t u\|_{L^2(0,T;H^1(\T^d)')}\le C\},
$$
where $C>0$ will be determined later. The set $K$ is convex and, by Aubin's lemma, 
compact in $L^2(Q_T)$. Let $v\in K$ and let $u\in L^2(Q_T)$ be the weak
solution to
\begin{equation}\label{app.1}
  \pa_t u = \diver(v^{\alpha-1}\na u)\quad\mbox{in }\T^d,\ t>0, \quad u(0)=u_{0,\eps}.
\end{equation}
This defines the fixed-point operator $Z:K\to L^2(Q_T)$, $v\mapsto u$. Standard
arguments show that $Z$ is continuous. We verify that $Z(K)\subset K$.
By the maximum principle, $\eps\le u\le M$. Using $u$ as a test function in the
weak formulation of \eqref{app.1} shows that 
$\|u\|_{L^2(0,T;H^1(\T^d))}\le C_1(\eps)$, where $C_1(\eps)>0$ is some constant
depending on $\eps$. Moreover, $\|\pa_t u\|_{L^2(0,T;H^1(\T^d)')}
\le \|v^{\alpha-1}\na u\|_{L^2(Q_T)}\le C_2(\eps)$. 
Thus, setting, $C:=C_1(\eps)+C_2(\eps)$, we infer that $u\in K$.
By the fixed-point theorem of Schauder, there exists a fixed point $u_\eps$ of $Z$.

In order to perform the limit $\eps\to 0$, we need to derive $\eps$-independent
estimates for $u_\eps$. To this end, we need to distinguish several cases.
First, let $\alpha=-1$. Employing the test function $1-1/u_\eps$
in the weak formulation of \eqref{app.1} with $u=v=u_\eps$, we find that
$$
  \int_{\T^d}(u_\eps(t)-\log u_\eps(t))\, dx 
	+ \int_0^t\int_{\T^d}|\na u_\eps^{-1}|^2 \, dxds
	= \int_{\T^d}(u^0+\eps-\log(u^0+\eps))\, dx.
$$
The right-hand side is uniformly bounded as we assumed that
$-\int_{\T^d}\log u^0 \, dx<\infty$.
Since $|\na u_\eps^{-1}|^2 \ge M^{-4}|\na u_\eps|^2$, we infer uniform
estimates for $u_\eps$ in $L^2(0,T;H^1(\T^d))$ and also in $H^1(0,T;H^1(\T^d)')$.

Next, let $\alpha\neq -1$. The test function $u_\eps^\alpha$ 
in the weak formulation of \eqref{app.1} gives
$$
  \frac{1}{\alpha+1}\int_{\T^d}u_\eps(t)^{\alpha+1}\, dx
	+ \frac{1}{\alpha}\int_0^t\int_{\T^d}|\na u_\eps^\alpha|^2\, dxds 
	= \frac{1}{\alpha+1}\int_{\T^d}(u^0+\eps)^{\alpha+1}\, dx,
$$
If $-1<\alpha<0$, we write this equation as
\begin{align*}
  \int_0^t\int_{\T^d}|\na u_\eps^\alpha|^2\, dxds 
	&= -\frac{\alpha}{\alpha+1}\int_{\T^d}u_\eps(t)^{\alpha+1}\, dx
	+ \frac{\alpha}{\alpha+1}\int_{\T^d}(u^0+\eps)^{\alpha+1}\, dx \\
	&\le -\frac{\alpha}{\alpha+1}\int_{\T^d}M^{\alpha+1}\, dx.
\end{align*}
If $\alpha<-1$, we obtain
$$
  \frac{1}{-\alpha-1}\int_{\T^d}u_\eps(t)^{\alpha+1}\, dx
	+ \frac{1}{-\alpha}\int_0^t\int_{\T^d}|\na u_\eps^\alpha|^2\, dxds 
	= \frac{1}{-\alpha-1}\int_{\T^d}(u^0+\eps)^{\alpha+1}\, dx.
$$
In both cases, since $u^0$ is assumed to be bounded, we infer a uniform
bound for $u_\eps^\alpha$ in $L^2(0,T;H^1(\T^d))$ and consequently also 
for $\pa_t u_\eps$ in $L^2(0,T;H^1(\T^d)')$. Moreover, in view of
$|\na u_\eps^\alpha|^2 = \alpha^2 u_\eps^{2(\alpha-1)}|\na u_\eps|^2
\ge \alpha^2 M^{2(\alpha-1)}|\na u_\eps|^2$, it follows that $(u_\eps)$ is
bounded in $L^2(0,T;H^1(\T^d))$.

We infer for all $\alpha<0$ the following bounds:
$$
  \|u_\eps\|_{L^\infty(0,T;L^\infty(\T^d))} + \|u_\eps^\alpha\|_{L^2(0,T;H^1(\T^d))}
	+ \|u_\eps\|_{L^2(0,T;H^1(\T^d))} + \|\pa_t u_\eps\|_{L^2(0,T;H^1(\T^d)')} \le C_3.
$$
By Aubin's lemma, there exists a subsequence which is not relabeled such that 
$u_\eps\to u$ strongly in $L^2(Q_T)$ as $\eps\to 0$.
Moreover, $u_\eps^\alpha\rightharpoonup u^\alpha$ weakly in $L^2(0,T;H^1(\T^d))$
and $\pa_t u_\eps\rightharpoonup \pa_t u$ weakly in $L^2(0,T;H^1(\T^d)')$.
Thus, we may pass to the limit $\eps\to 0$ in the weak formulation which shows
that $u$ solves \eqref{1.eq}.
\end{proof}

\begin{proof}[Proof of Theorem \ref{thm.time}]
Employing \eqref{1.eq} and integration by parts, we find that
$$
  \frac{dH_\beta}{dt} = -\beta(\beta-1)\int_{\T^d}u^{\alpha+\beta-3}|\na u|^2 \, dx
	= -\frac{4}{\beta}(\beta-1)\int_{\T^d}u^{\alpha-1}|\na u^{\beta/2}|^2 \, dx.
$$
We employ the bound $u\le M=\sup_{\T^d}u^0$ and the Beckner inequality \cite{Bec89}
(note that we assumed that $\mbox{vol}(\T^d)=1$),
\begin{equation}\label{app.bec}
  \int_{\T^d}u^\beta \, dx - \bigg(\int_{\T^d}u\, dx\bigg)^\beta
	\le C_B\int_{\T^d}|\na u^{\beta/2}|^2 \, dx \quad\mbox{for }
	u^{\beta/2}\in H^1(\Omega),\ 1<\beta\le 2,
\end{equation}
to obtain
$$
  \frac{dH_\beta}{dt} \le -\frac{4(\beta-1)}{\beta C_B}M^{\alpha-1}
	\bigg(\int_{\T^d}u^\beta \, dx - \bigg(\int_{\T^d}u\, dx\bigg)^\beta\bigg)
	= -\frac{4(\beta-1)}{\beta C_B}M^{\alpha-1}H_\beta.
$$
Then Gronwall's lemma yields $H[u(t)]\le H[u^0]e^{-\lambda t}$ with
$\lambda=-4(\beta-1)M^{\alpha-1}/(\beta C_B)$ for $1<\beta\le 2$. 

For the second result, let $-1\le\alpha<0$ and $\beta=2(1-\alpha)$. 
Similarly as above, we find that
\begin{align*}
  \frac{dH_\beta}{dt} 
	&= -\frac{4\beta(\beta-1)}{(\alpha+\beta-1)^2}
	\int_{\T^d}|\na u^{(\alpha+\beta-1)/2}|^2 \, dx \\
	&\le -\frac{8(1-2\alpha)}{1-\alpha}\int_{\T^d}\bigg(\int_{\T^d}
	u^{\alpha+\beta-1}\, dx - \bigg(\int_{\T^d}u\, dx\bigg)^{\alpha+\beta-1}\bigg) \\
	&= -\frac{8(1-2\alpha)}{1-\alpha}H_{\beta/2}[u],
\end{align*}
since $\alpha+\beta-1=\beta/2\in(1,2]$. Using the inequalities
$\|u^{\beta/2}\|_{L^2(\T^d)}\ge\|u^{\beta/2}\|_{L^1(\T^d)}$ and
$\|u^{\beta/2}\|_{L^2(\T^d)}=\|u\|_{L^\beta(\T^d)}^{\beta/2}
\ge \|u\|_{L^1(\T^d)}^{\beta/2}$
(again we employ $\mbox{vol}(\T^d)=1$ here), it follows that
\begin{align*}
  H_\beta[u] &= \|u^{\beta/2}\|_{L^{2}(\T^d)}^2 - \|u\|_{L^1(\T^d)}^\beta \\
	&= \big(\|u^{\beta/2}\|_{L^{2}(\T^d)}+\|u\|_{L^1(\T^d)}^{\beta/2}\big)
	\big(\|u^{\beta/2}\|_{L^{2}(\T^d)}-\|u\|_{L^1(\T^d)}^{\beta/2}\big) \\
	&\ge \big(\|u^{\beta/2}\|_{L^{2}(\T^d)}+\|u\|_{L^1(\T^d)}^{\beta/2}\big)
	\big(\|u^{\beta/2}\|_{L^{1}(\T^d)}-\|u\|_{L^1(\T^d)}^{\beta/2}\big) \\
	&\ge \big(\|u^{\beta/2}\|_{L^{1}(\T^d)}+\|u\|_{L^1(\T^d)}^{\beta/2}\big)
	H_{\beta/2}[u] \ge 2\|u\|_{L^1(\T^d)}^{\beta/2}H_{\beta/2}[u].
\end{align*}
Since the solution to \eqref{1.eq} conserves mass, $\|u(t)\|_{L^1(\T^d)}
=\|u^0\|_{L^1(\T^d)}$, and we end up with
$$
  \frac{dH_\beta}{dt} \le -\frac{4(1-2\alpha)}{(1-\alpha)
	\|u^0\|_{L^1(\T^d)}^{\beta/2}}H_\beta[u].
$$
Then Gronwall's lemma shows that $H_\beta[u(t)]\le H_\beta[u^0]e^{-\lambda t}$ with
$\lambda=4(1-2\alpha)/((1-\alpha)\|u^0\|_{L^1(\T^d)}^{1-\alpha})$.

Finally, the statement of the theorem follows after applying the generalized
Csisz\'ar-Kullback inequality in the form
$$
  \|u-v\|_{L^1(\T^d)}^2 \le C_\beta\|v\|_{L^1(\T^d)}\bigg(
	\int_{\T^d} u^\beta - \bigg(\int_{\T^d}u\, dx\bigg)^\beta\bigg), \quad 1<\beta\le 2.
$$
for functions $u$, $v\in L^\beta(\T^d)$ such that $\int_{\T^d}u\, dx=\int_{\T^d}v\, dx$.
The proof is a slight generalization of the proof of Theorem 1.4 in
\cite{GoLe10} taking $\varphi(t)=t^\beta$.
\end{proof}


\section{Computations}

In this appendix, we detail the calculations for the coefficients of the
matrix \eqref{disc.abc}, and the Hessian of the discrete
entropy \eqref{disc.SN}, both in the case $\alpha=-1$.

\subsection{Computation of the coefficients $M_{ij}$}\label{sec.coeff}

We compute the coefficients of the matrix \eqref{disc.M}, i.e.\ the 
coefficients $a_{ij}$, $b_{ij}$, and $c_{ij}$ defined in \eqref{disc.abc}.
In the following, we set
$$
  \delta_j = \omega_j-\omega_{j-1}, \quad
	\Delta_j = \frac12(\omega_{j+1}-\omega_{j-1}), \quad
	\sigma_j = \frac13(\omega_{j+1}+\omega_j+\omega_{j-1}).
$$

\begin{lemma}[Coefficients $a_{ij}$]\label{lem.a}
The coefficients of the symmetric matrix $A=(a_{ij})$, defined in \eqref{disc.abc},
read as
\begin{align*}
  a_{jj} &= \Delta_j^2(M-\sigma_j) - \frac{\Delta_j}{60}(12\Delta_j^1
	+ \delta_j^2 + \delta_{j+1}^2), \quad 1\le j\le N-1, \\
	a_{j,j+1} &= \Delta_j\Delta_{j+1}(M-\sigma_{j+1}) - \frac{\delta_j^3}{120},
	\quad 1\le j\le N-1, \\
	a_{jk} &= \Delta_j\Delta_k(M-\sigma_k), \quad j+2\le k\le N-1, \\
	a_{1N} &= \frac12\Delta_1\Delta_N\bigg(M-\frac{\omega_2}{3}\bigg)
	- \frac{\Delta_N^3}{120}, \\
	a_{jN} &= \frac12\Delta_j\Delta_N\bigg(M-\sigma_j+\frac{\delta_N}{3}\bigg),
	\quad 2\le j\le N-2, \\
	a_{N-1,N} &= \frac12\Delta_{N-1}\Delta_N\bigg(M-\frac13(\omega_{N-2}+2\omega_{N-1})
	\bigg)-\frac{\Delta_N^3}{120}, \\
	a_{NN} &= \frac{M}{4}\Delta_N^2 + \frac{\Delta_N^3}{10}.
\end{align*}
\end{lemma}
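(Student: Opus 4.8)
The plan is to collapse each double integral in \eqref{disc.abc} to a single integral and then integrate piecewise polynomials. The key observation is the identity already used in Section~\ref{sec.lagr}: for integrable $f,h:[0,M]\to\R$,
$$
  \int_0^M\!\!\int_0^M(M-\max\{\eta,\eta'\})f(\eta)h(\eta')\,d\eta\,d\eta'
  = \int_0^M\Big(\int_0^\omega f\Big)\Big(\int_0^\omega h\Big)\,d\omega,
$$
which is simply the polarized (bilinear) version of the equality $W[u,u^*]^2=\int_0^M(\int_0^\omega(g-g^*))^2\,d\omega$ established there, with $g-g^*$ replaced by two independent factors. Introducing the primitive $P_i(\omega):=\int_0^\omega\phi_i(\eta)\,d\eta$ of the $i$-th basis function and taking $f=\phi_i$, $h=\phi_j$, this reduces the definition \eqref{disc.abc} to
$$
  a_{ij}=\int_0^M P_i(\omega)P_j(\omega)\,d\omega.
$$
All seven formulas then follow by inserting the explicit primitives and integrating.

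First I would record the elementary properties of $P_i$ for an interior hat $\phi_i$ ($1\le i\le N-1$): it vanishes on $[0,\omega_{i-1}]$, rises through two quadratic arcs on $[\omega_{i-1},\omega_{i+1}]$, and is constant on $[\omega_{i+1},M]$ with plateau value $P_i(M)=\int_0^M\phi_i\,d\omega=\Delta_i$, the area under the hat. The only nontrivial auxiliary fact is the first-moment (centroid) identity $\int_0^M\omega\,\phi_i(\omega)\,d\omega=\Delta_i\sigma_i$, i.e.\ the centroid of a hat lies at the average $\sigma_i$ of its three nodes; one integration by parts then gives $\int_0^M P_i\,d\omega=M\Delta_i-\Delta_i\sigma_i=\Delta_i(M-\sigma_i)$. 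This settles the well-separated case $j+2\le k\le N-1$ at once: since $P_k$ vanishes on $[0,\omega_{k-1}]$ and $P_j$ is already constant $=\Delta_j$ on $[\omega_{k-1},M]$ (because $k\ge j+2$), we obtain $a_{jk}=\Delta_j\int_0^M P_k\,d\omega=\Delta_j\Delta_k(M-\sigma_k)$.

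For the adjacent and diagonal interior entries $a_{j,j+1}$ and $a_{jj}$ the varying parts of the two primitives overlap on $[\omega_{j-1},\omega_{j+1}]$. Here I would split $\int_0^M P_iP_j$ into the plateau contribution (treated exactly as above) plus a genuinely local integral of products of the quadratic arcs. These local integrals are routine and produce precisely the displayed cubic corrections: $-\Delta_j(12\Delta_j^2+\delta_j^2+\delta_{j+1}^2)/60$ for $a_{jj}$, where the two arcs sit on intervals of length $\delta_j$ and $\delta_{j+1}$, explaining the symmetric appearance of both, and a term of order $\delta^3/120$ for $a_{j,j+1}$.

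The step I expect to be the main obstacle is the boundary column $a_{1N}$, $a_{jN}$, $a_{N-1,N}$, $a_{NN}$. Because of the point-symmetric identification $g_0=g_N$, the function $\phi_N$ is not an ordinary hat but is split between $[0,\omega_1]$ and $[\omega_{N-1},M]$; consequently $P_N$ equals $\omega-\omega^2/(2\omega_1)$ near $0$, is constant and equal to $\tfrac12\omega_1=\tfrac12\Delta_N$ on the long middle interval $[\omega_1,\omega_{N-1}]$, and acquires a second quadratic bump near $M$. Using the symmetry relations $\omega_{N-1}=M-\omega_1$ and $\Delta_N=\omega_1$, the overlap of an interior $P_j$ with $P_N$ now sees the plateau value $\tfrac12\Delta_N$ rather than $\Delta_N$, which is exactly the origin of the factors $\tfrac12$ in $a_{1N}$, $a_{jN}$ and $a_{N-1,N}$, while the two quadratic bumps of $P_N$ generate the cubic terms $\Delta_N^3/120$ and, in $a_{NN}=\int_0^M P_N^2\,d\omega$, the term $\Delta_N^3/10$ sitting on top of the plateau contribution $M\Delta_N^2/4$. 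Keeping the two end-bumps, their orientation and their endpoints straight under the point-symmetry---and correctly locating which of the three pieces of $P_N$ overlaps the support of $P_j$---is where coefficient and sign errors are most likely; by contrast the interior formulas are mechanical once the reduction $a_{ij}=\int_0^M P_iP_j\,d\omega$ and the moment identity are in hand.
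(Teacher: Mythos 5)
Your argument is correct, and every formula I spot-checked ($a_{jk}$ for $k\ge j+2$, $a_{jj}$, $a_{jN}$ for $2\le j\le N-2$, and $a_{NN}=M\Delta_N^2/4+\Delta_N^3/10$) does come out of your reduction; but you take a genuinely different route from the paper. You polarize the one-dimensional Wasserstein identity to write $a_{ij}=\int_0^M P_iP_j\,d\omega$ with $P_i(\omega)=\int_0^\omega\phi_i$, so each coefficient is a single integral of products of piecewise quadratic primitives, and the deviation from the ``plateau'' product $\Delta_i\Delta_j(M-\sigma_j)$ is a local integral over the cells where both primitives are still varying. The paper instead splits the kernel as $M-\max\{\eta,\eta'\}=M-\eta'-(\eta-\eta')_+$ (for $j\le k$), which gives the main term as the product of the mass $\Delta_j$ and the first moment $\Delta_k\sigma_k$ of the hat functions, minus a residual double integral $J_{jk}=\int\!\!\int(\eta-\eta')_+\phi_j(\eta)\phi_k(\eta')\,d\eta\,d\eta'$ that vanishes unless the supports overlap; the cubic corrections are exactly $J_{jj}$ and $J_{j,j+1}$, and the boundary column is handled by splitting the residual into the contributions $K_j^\pm$ of the two pieces $[0,\omega_1]$ and $[\omega_{N-1},M]$ of the support of $\phi_N$. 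The two decompositions encode the same structural facts --- the moment identity $\int\omega\phi_i\,d\omega=\Delta_i\sigma_i$, the locality of the corrections, and the halved plateau $P_N\equiv\Delta_N/2$ on $[\omega_1,\omega_{N-1}]$ versus the two-piece support of $\phi_N$ as the source of the factors $\tfrac12$ --- and the remaining work is in both cases the same routine evaluation of cubic integrals, which your sketch defers but which presents no difficulty. Your formulation has the small additional merit of exhibiting $A$ as the Gram matrix of the primitives in $L^2(0,M)$, hence manifestly symmetric and positive semi-definite, and it also makes plain that the exponent in the stated $a_{jj}$ should read $12\Delta_j^2$, not $12\Delta_j^1$.
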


This lemma was proved in \cite[Lemma 2.6]{DMM10}. For the convenience of the reader,
we recall the proof below.

\begin{proof}
We reformulate the integral in \eqref{disc.abc}:
\begin{align*}
  & a_{jk} = M\int_0^M\phi_j(\eta)\,d\eta\int_0^M\phi_k(\eta')\,d\eta' 
	- \int_0^M\phi_j(\eta)\,d\eta\int_0^M\eta'\phi_k(\eta')\,d\eta'
	- J_{jk}, \\
  &\mbox{where }J_{jk} = \int_0^M\int_0^M(\eta-\eta')_+\phi_j(\eta)
	\phi_k(\eta')\,d\eta d\eta',
\end{align*}
where $(\eta-\eta')_+:=\max\{0,\eta-\eta'\}$. 
The first two integrals become
$$
  \int_0^m\phi_j(\eta)\,d\eta = \delta_j, \quad 
	\int_0^M\eta'\phi_k(\eta')\,d\eta'= \Delta_k\sigma_k.
$$
Since $A$ is symmetric, it is sufficient to consider $1\le j\le k$.
If $j+2\le k<N$, the support of $\phi_j(\eta)\phi_k(\eta')$ is contained in 
$[\omega_{j-1},\omega_{j+1}]\times[\omega_{k-1}\times\omega_{k+1}]$.
Hence, the support is nonvanishing if $\eta\le\omega_{j+1}\le
\omega_{k-1}\le\eta'$, but then $(\eta-\eta')_+=0$
except for $\eta=\eta'$. We conclude that $J_{jk}=0$ and it is sufficient to
compute only $J_{jj}$ and $J_{j,j+1}$:
\begin{align*}
  J_{jj} &= 
	\int_{\omega_{j-1}}^{\omega_{j+1}}\phi_j(\eta)\bigg(\int_{\omega_{j-1}}^\eta
	(\eta-\eta')\phi_j(\eta')\,d\eta'\bigg)\,d\eta 
	= \frac{\Delta_j}{60}(12\Delta_j^2 + \delta_j^2 + \delta_{j+1}^2), \\
	J_{j,j+1} &= \int_{\omega_{j-1}}^{\omega_{j+1}}\phi_j(\eta)\bigg(
	\int_{\omega_{j-1}}^{\max\{\eta,\omega_j\}}
	(\eta-\eta')\phi_j(\eta')\,d\eta'\bigg)\,d\eta
	= \frac{\delta_j^3}{120}.
\end{align*}

Next, let $k=N$. Then the support of $\phi_N$ is contained in 
$[0,\omega_1]\cup[\omega_{N-1},M]$, and we compute:
\begin{align*}
  a_{jN} &= M\int_0^M\phi_j(\eta)\,d\eta\int_0^M\phi_N(\eta')\,d\eta'
	- \int_0^M\eta\phi_j(\eta)\,d\eta\int_0^{\omega_1}\phi_N(\eta')\,d\eta' \\
	&\phantom{xx}{}
	- \int_0^M\phi_j(\eta)\,d\eta\int_{\omega_{N-1}}^M\eta'\phi_N(\eta')\,d\eta'
	- K_j^+ - K_j^- \\
	&= \frac12\Delta_j\Delta_N\bigg(M-\sigma_j+\frac{\delta_N}{3}\bigg)
	- K_j^+ - K_j^-,
\end{align*}
where
\begin{align*}
  K_j^+ &:= \int_0^M\int_\eta^{\omega_1}(\eta'-\eta)_+\phi_j(\eta)\phi_N(\eta')
	\,d\eta d\eta', \\
	K_j^- &:= \int_0^M\int_{\omega_{N-1}}^M(\eta-\eta')_+\phi_j(\eta)\phi_N(\eta')
	\,d\eta d\eta'.
\end{align*}
For $2\le j\le N-2$, the supports of $\phi_j$ and $\phi_N$ do not intersect
such that $K_j^\pm=0$. For $j=1$, we have $K_1^-=0$ and $K_1^+=\Delta_N^3/120$,
whereas for $j=N-1$, $K_{N-1}^+=0$ and $K_{N-1}^-=\Delta_N^3/120$. Furthermore,
$K_N^\pm=\Delta_N^3/30$. Moreover, since $\delta_N=\omega_N-\omega_{N-1}
=(M-\omega_0)-(M-\omega_1)=\omega_1$,
$$
  M-\sigma_1+\frac{\delta_N}{3} = M-\frac13(\omega_0+\omega_1+\omega_2)
	+ \frac13\omega_1 = M-\frac{\omega_2}{3}.
$$
Collecting these results, the lemma follows.
\end{proof}

\begin{lemma}[Coefficients $b_{ij}$]\label{lem.b}
The coefficients of the matrix $B=(b_{ij})$, defined in \eqref{disc.abc}, read as
\begin{align*}
  b_{jj} &= \frac23\delta_j(M\Delta_j - \delta_j\sigma_j) + \beta_{jj}, 
	\quad 1\le j\le N, \\
	b_{j+1,j} &= \frac13\big(2M\delta_{j+1}\Delta_j - (\omega_{j+2}^2-\omega_{j+1}^2)
	\Delta_j\big) + \beta_{j+1,j}, \quad 1\le j\le N-1, \\
	b_{jk} &= \frac23(M\delta_j\Delta_k - \delta_j\delta_k\sigma_k), \quad
	1\le j<k\le N, \\
	b_{jk} &= \frac13\big(2M\delta_j\Delta_k - (\omega_{j+1}^2-\omega_j^2)\Delta_k\big),
  \quad j\ge k-2, \\
	b_{1N} &= \frac23\delta_1\Delta_N - \frac16(\omega_2^2-\omega_1^2)\delta_N
	- \frac19(2\omega_{N}^2 - \omega_{N-1}^2 - \omega_{N-1}\omega_{N})\delta_1 
	- \beta_{1N}, \\
	b_{jN} &= \frac23\delta_j\Delta_N - \frac16(\omega_{j+1}^2-\omega_j^2)\delta_N
	- \frac19(2\omega_{N}^2 - \omega_{N-1}^2 - \omega_{N-1}\omega_{N})\delta_j,
	\quad 2\le j\le N-1, \\
	b_{NN} &= \frac23\delta_N\Delta_N - \frac16(\omega_{N}^2-\omega_{N-1}^2)\delta_N
	- \frac19(2\omega_{N}^2 - \omega_{N-1}^2 - \omega_{N-1}\omega_{N})\delta_N
	- \beta_{NN},
\end{align*}
where
\begin{align*}
  \beta_{jj} &= - \frac 1{45} \omega_{j+1}^2 + \frac 1{90} \omega_{j+1}^2 \omega_j 
	+ \frac 1{18} \omega_{j+1}^2 \omega_{j+2} - \frac 1{15} \omega_{j+1} \omega_j^2 
	+ \frac 1 9 \omega_{j+2} \omega_{j+1} \omega_j \\
  &\phantom{xx}{}- \frac 19 \omega_{j+2}^2 \omega_{j+1} + \frac 7{90} \omega_j^3 
	- \frac 16 \omega_j^2 \omega_{j+2} + \frac 19 \omega_j \omega_{j+2}^2, \\
  \beta_{j+1,j} &= \frac 1 {45} \left(\omega_{j+2}^3 - \omega_{j+1}^3 
	+ 3(\omega_{j+1}^2 \omega_{j+2} - \omega_{j+2}^2 \omega_{j+1}) \right), \\
	\beta_{1N} &= \frac 1{45} \big(\omega_2^2 - \omega_1^2 + 3(\omega_2 \omega_1^2 
	- \omega_2^2 \omega_1)\big), \\
	\beta_{NN} &= \frac 1{45} \big(\omega_{N+1}^3 - \omega_N^3 
	+ 3(\omega_N^2 \omega_{N+1} - \omega_{N+1}^2 \omega_N)\big).
\end{align*}
\end{lemma}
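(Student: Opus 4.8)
The plan is to mirror the computation of the coefficients $a_{ij}$ in Lemma~\ref{lem.a}, the only structural difference being that the first factor in the defining integral \eqref{disc.abc} is now the piecewise quadratic bump $\phi_{N+i}$, supported on the single cell $[\omega_{i-1},\omega_i]$, while the second factor $\phi_j$ is the same piecewise linear hat as before. Writing $M-\max\{\eta,\eta'\}=M-\eta'-(\eta-\eta')_+$ with $\eta$ attached to $\phi_{N+i}$ and $\eta'$ to $\phi_j$, the coefficient splits as
\[
  b_{ij} = M\int_0^M\phi_{N+i}\,d\eta\int_0^M\phi_j\,d\eta'
	- \int_0^M\phi_{N+i}\,d\eta\int_0^M\eta'\phi_j\,d\eta' - \widetilde J_{ij},
\]
with $\widetilde J_{ij}=\int_0^M\int_0^M(\eta-\eta')_+\phi_{N+i}(\eta)\phi_j(\eta')\,d\eta\,d\eta'$, exactly as in the proof of Lemma~\ref{lem.a}. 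This isolates a separable part, given by products of one-dimensional moments, from a genuinely two-dimensional correction $\widetilde J_{ij}$.

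The required one-dimensional moments are elementary: integrating the bump gives $\int_0^M\phi_{N+i}\,d\eta=\tfrac23\delta_i$ and $\int_0^M\eta\,\phi_{N+i}\,d\eta=\tfrac13(\omega_i^2-\omega_{i-1}^2)$, while the hat moments $\int_0^M\phi_j\,d\eta'=\Delta_j$ and $\int_0^M\eta'\phi_j\,d\eta'=\Delta_j\sigma_j$ are already recorded in the proof of Lemma~\ref{lem.a}. Inserting these into the display above produces the separable leading terms of every $b_{ij}$. For the column $j=N$ one additionally needs the moments of $\phi_N$ over its split support $[0,\omega_1]\cup[\omega_{N-1},M]$; these account for the extra $-\tfrac19(2\omega_N^2-\omega_{N-1}^2-\omega_{N-1}\omega_N)$-type contributions appearing in $b_{1N}$, $b_{jN}$, and $b_{NN}$.

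Next I would decide which correction integrals $\widetilde J_{ij}$ are nonzero and evaluate them. Since $\phi_{N+i}$ lives on $[\omega_{i-1},\omega_i]$ and $\phi_j$ on $[\omega_{j-1},\omega_{j+1}]$, three regimes occur. If the bump lies entirely to the left of the hat, then $\eta<\eta'$ on the support and $(\eta-\eta')_+\equiv 0$, so $\widetilde J_{ij}=0$ and $b_{ij}$ reduces to the clean product $\tfrac23\delta_i\Delta_j(M-\sigma_j)$. If the bump lies entirely to the right, then $(\eta-\eta')_+=\eta-\eta'$ throughout, $\widetilde J_{ij}$ again factorizes into the one-dimensional moments listed above, and one obtains the second family of separable formulas. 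Only when the supports genuinely overlap, that is for $j\in\{i-1,i\}$ and in the two periodic cases coupling to $\phi_N$, does $\widetilde J_{ij}$ remain a truly two-dimensional integral; these overlap integrals are the origin of the correction terms $\beta_{jj}$, $\beta_{j+1,j}$, $\beta_{1N}$, and $\beta_{NN}$. For each such case I would split the cell at the kink $\omega_j$ of $\phi_j$ and along the diagonal $\eta=\eta'$, substitute the explicit quadratic and linear expressions, integrate, and simplify using the point-symmetry relation $\omega_{N-i}=M-\omega_i$.

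The main obstacle is precisely the evaluation of the surviving overlap integrals $\widetilde J_{ij}$. In contrast to Lemma~\ref{lem.a}, where both factors are piecewise linear, the integrand here is a product of a quadratic and a hat over a triangular region cut out by $\eta>\eta'$, so the domain bookkeeping is heavier and the resulting cubic polynomials in $\omega_{i-1},\omega_i,\omega_{i+1}$ are exactly the unwieldy expressions $\beta_{jj}$ and $\beta_{j+1,j}$. The split support of $\phi_N$ makes the last row and column the most delicate part, and I expect both the bulk of the effort and the main risk of algebraic error to lie there. Conceptually, however, nothing beyond the decomposition above is required; the difficulty is purely one of careful case analysis and polynomial algebra, and the final expressions can be cross-checked on a uniform grid, where $\delta_j=\Delta_j$ and the formulas collapse to symmetric closed forms.
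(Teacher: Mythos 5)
Your proposal follows essentially the same route as the paper's proof: the same splitting $M-\max\{\eta,\eta'\}=M-\eta'-(\eta-\eta')_+$ into separable one-dimensional moments plus a correction integral, the same observation that this correction either vanishes or (when the bump lies entirely to the right of the hat) itself factorizes into the moments $\int\phi_{N+i}$, $\int\eta\,\phi_{N+i}$ --- which is exactly equivalent to the paper's alternative decomposition for $j>k$ --- and the same identification of the adjacent-support cases and the split support of $\phi_N$ as the only sources of the genuinely two-dimensional $\beta$-corrections. The remaining work you describe (splitting at the kink and the diagonal and grinding out the cubic polynomials) is precisely what the paper also does, and likewise only sketches.
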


\begin{proof}
The computation is similar to the previous proof. We write the integral for
$b_{jk}$ with $j$, $k\le N-1$ and for $j<k$ as
$$
  b_{jk} = M\int_0^M\phi_{N+j}(\eta)\,d\eta\int_0^M\phi_k(\eta')\,d\eta'
	- \int_0^M\phi_{N+j}(\eta)\,d\eta\int_0^M\eta'\phi_k(\eta')\,d\eta' - J^-_{jk},
$$
and for $j>k$ as
$$
  b_{jk} =  M\int_0^M\phi_{N+j}(\eta)\,d\eta\int_0^M\phi_k(\eta')\,d\eta'
	- \int_0^M\eta\phi_{N+j}(\eta)\,d\eta\int_0^M\phi_k(\etaÄ)\,d\eta' - J^+_{jk},
$$
where
\begin{align*}
  J^-_{jk} = \int_0^M\int_0^M(\eta-\eta')_+\phi_{N+j}(\eta)\phi_k(\eta')\,d\eta d\eta',
	\\
	J^+_{jk} = \int_0^M\int_0^M(\eta'-\eta)_+\phi_{N+j}(\eta)\phi_k(\eta')\,d\eta d\eta'.
\end{align*}
We compute
$$
  \int_0^M\phi_{N+j}(\eta)\,d\eta = \frac23\delta_j, \quad
	\int_0^M\eta\phi_{N+j}(\eta)\,d\eta = \frac13(\omega_{j+1}^2-\omega_j^2).
$$
The integrals $J^\pm_{jk}$ vanish if $k\neq j-1,j$ since $(\eta-\eta')_+$
and $(\eta'-\eta)_+$ vanish. This proves the expressions for $b_{jk}$ with
$j\le k-1$ and $j\ge k+2$.
The coefficients $b_{jj}$ and $b_{j+1,j}$ are calculated in the same way.

It remains to compute the matrix coefficients coming from the boundary elements.
The computation of $b_{jN}$ is straightforward as the support of $\phi_{2N}$ is
contained on the single subinterval $[\omega_{N-1},M]$. For the boundary
elements $b_{jN}$, we take into account that the support of $\phi_N$ is contained
in $[\omega_{N-1},M]$ and $[0,\omega_1]$, which yields
\begin{align*}
  b_{jN} &= M\int_0^M\phi_{N+j}(\eta)\,d\eta\int_0^M\phi_N(\eta')\,d\eta'
	- \int_0^M \eta\phi_{N+j}(\eta)\,d\eta\int_0^{\omega_1}\phi_N(\eta')\,d\eta'
	- \beta_{1N} - \beta_{NN},
\end{align*}
where
\begin{align*}
  \beta_{1N} &= \int_0^M\int_0^{\omega_1}(\eta'-\eta)_+\phi_{N+j}(\eta)
	\phi_N(\eta')\,d\eta d\eta', \\
	\beta_{NN} &= \int_0^M\int_{\omega_{N-1}}^M(\eta-\eta')_+\phi_j(\eta)
	\phi_N(\eta')\,d\eta d\eta',
\end{align*}
and the computation is as before.
\end{proof}

\begin{lemma}[Coefficients $c_{ij}$]\label{lem.c}
The coefficients of the symmetric matrix $C=(c_{ij})$, defined in \eqref{disc.abc},
read as
\begin{align*}
  c_{jj} &= \frac 89 \delta_j^2 - \frac 29 \delta_j (\omega_{j+1}^2 - \omega_j^2) 
	- \frac 2 {35} \big(\omega_{j+1}^3 - \omega_j^3 
	- 3\omega_{j+1}\omega_j(\omega_{j+1} - \omega_j)\big), \quad 1\le j\le N, \\
	c_{jk} &= \frac 89 \delta_j \delta_k - \frac 29 \delta_j (\omega_{k+1}^2 
	- \omega_k^2), \quad 1\le j<k\le N.
\end{align*}
\end{lemma}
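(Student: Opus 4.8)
The plan is to compute the coefficients $c_{jk}$ directly from the defining integral \eqref{disc.abc}, mirroring the reformulation already used for $a_{jk}$ and $b_{jk}$. First I would split the kernel $M-\max\{\eta,\eta'\}$ by writing $\max\{\eta,\eta'\}=\eta'+(\eta-\eta')_+$, so that
\begin{align*}
  c_{jk} &= M\int_0^M\phi_{N+j}(\eta)\,d\eta\int_0^M\phi_{N+k}(\eta')\,d\eta'
	- \int_0^M\phi_{N+j}(\eta)\,d\eta\int_0^M\eta'\phi_{N+k}(\eta')\,d\eta' \\
	&\phantom{xx}{}- \int_0^M\int_0^M(\eta-\eta')_+\phi_{N+j}(\eta)\phi_{N+k}(\eta')
	\,d\eta\,d\eta'.
\end{align*}
Since $C$ is symmetric, it suffices to treat $j\le k$. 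I would record the two one-dimensional moments of the quadratic bump function $\phi_{N+j}$, namely $\int_0^M\phi_{N+j}=\frac23\delta_j$ (already computed in the proof of Lemma~\ref{lem.b}) and $\int_0^M\eta\phi_{N+j}(\eta)\,d\eta=\frac13(\omega_{j+1}^2-\omega_j^2)$, together with the second moment $\int_0^M\eta^2\phi_{N+j}(\eta)\,d\eta$, which is needed for the diagonal term and is a routine polynomial integral over $[\omega_{j-1},\omega_j]$ of the explicit parabola $1-\big(\frac{2\eta-(\omega_{j-1}+\omega_j)}{\delta_j}\big)^2$.

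The key structural observation, exactly as in the earlier lemmas, is that each $\phi_{N+j}$ is supported on the single subinterval $[\omega_{j-1},\omega_j]$. Hence for the off-diagonal case $j<k$ the supports of $\phi_{N+j}$ and $\phi_{N+k}$ are disjoint (they meet at most at a point), so the term $(\eta-\eta')_+$ vanishes on the product support and the double integral drops out entirely. This immediately reduces $c_{jk}$ for $j<k$ to the product of the two moments above, yielding $c_{jk}=\frac23 M\delta_j\cdot\frac23\delta_k - \frac23\delta_j\cdot\frac13(\omega_{k+1}^2-\omega_k^2)$, which is the claimed expression $\frac89\delta_j\delta_k-\frac29\delta_j(\omega_{k+1}^2-\omega_k^2)$. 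Note that here there is no boundary exception of the kind encountered for $a$ and $b$: the quadratic functions $\phi_{N+j}$ never wrap around the endpoint (unlike the hat function $\phi_N$), so a single formula covers all $1\le j<k\le N$.

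The only genuinely nontrivial computation is the diagonal entry $c_{jj}$, where $\eta$ and $\eta'$ range over the same interval and the $(\eta-\eta')_+$ term does contribute. I would compute
\begin{equation*}
  J_{jj} = \int_{\omega_{j-1}}^{\omega_j}\phi_{N+j}(\eta)
	\bigg(\int_{\omega_{j-1}}^{\eta}(\eta-\eta')\phi_{N+j}(\eta')\,d\eta'\bigg)\,d\eta
\end{equation*}
by the same iterated-integral technique used for $J_{jj}$ in the proof of Lemma~\ref{lem.a}, most cleanly after substituting $\eta=\frac{\omega_{j-1}+\omega_j}{2}+\frac{\delta_j}{2}s$ to reduce the parabola to $1-s^2$ on $[-1,1]$. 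Assembling $c_{jj}=\frac23 M\delta_j\cdot\frac23\delta_j - \frac23\delta_j\cdot\frac13(\omega_{j+1}^2-\omega_j^2)-J_{jj}$ and matching the symmetric cubic remainder against $\frac{2}{35}\big(\omega_{j+1}^3-\omega_j^3-3\omega_{j+1}\omega_j(\omega_{j+1}-\omega_j)\big)$ is the one place where care is required. I expect the main obstacle to be purely bookkeeping: keeping the polynomial algebra of the diagonal double integral organized so that the weight $\frac{2}{35}$ and the particular combination $\omega_{j+1}^3-\omega_j^3-3\omega_{j+1}\omega_j(\omega_{j+1}-\omega_j)$ emerge correctly, rather than any conceptual difficulty.
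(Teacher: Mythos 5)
Your proposal is correct and follows essentially the same route as the paper: the identical splitting of the kernel via $\max\{\eta,\eta'\}=\eta'+(\eta-\eta')_+$, the observation that the single-interval supports of the quadratic bumps make the $(\eta-\eta')_+$ term vanish off the diagonal, and the explicit evaluation of the diagonal double integral $\gamma_{jj}$. The only superfluous item is the second moment $\int_0^M\eta^2\phi_{N+j}\,d\eta$, which is not actually needed -- your own assembly of $c_{jj}$ uses only the zeroth and first moments together with $\gamma_{jj}$.
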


\begin{proof}
We compute
\begin{align*}
  & c_{jk} = M\int_0^M\phi_{N+j}(\eta)\,d\eta\int_0^M\phi_{N+k}(\eta')\,d\eta'
	- \int_0^M\phi_{N+j}(\eta)\,d\eta\int_0^M\eta'\phi_{N+k}(\eta')\,d\eta'
	- \gamma_{jk}, \\
	&\mbox{where }\gamma_{jk} = \int_0^M\int_0^M(\eta-\eta')_+\phi_{N+j}(\eta)
	\phi_{N+k}(\eta')\,d\eta d\eta'.
\end{align*}
As before, we find that $\gamma_{jk}=0$ for all $j\neq k$. Moreover,
$$
  \gamma_{jj} = \int_{\omega_{j-1}}^{\omega_j}\phi_{N+j}(\eta)\,d\eta
	\int_{\omega_j}^\eta (\eta-\eta')\phi_{N+j}(\eta')\,d\eta'
	= \frac{2}{35}\big(\omega_{j+1}^3 - \omega_j^3 - 3(\omega_{j+1}\omega_j
	(\omega_{j+1}-\omega_j)\big),
$$
which finishes the proof.
\end{proof}


\subsection{Computation of the coefficients of the Hessian of $S_N$}\label{sec.hess}

We compute the gradient and Hessian of the discrete entropy \eqref{disc.SN}
for the case $\alpha=-1$. We set for $k=0,\ldots,N-1$:
$$
  S_{N,k}[{\mathbf g}] = \frac12\int_{\omega_k}^{\omega_{k+1}}
	\big(g_k\phi_k(\omega) + g_{k+1}\phi_{k+1}(\omega) + g_{N+k}\phi_{N+k}(\omega)\big)
  \,d\omega,
$$
where ${\mathbf g}=(g_1,\ldots,g_{2N})\in {\mathbb G}_M^N$.
Furthermore, we abbreviate $\pa_k S_{N,j}=\pa S_{N,j}/\pa g_k$
and $\pa_{j,k} S_{N,\ell} = \pa S_{N,\ell}/\pa g_j\pa_k g$.
A computation shows that
\begin{align*}
  \pa_k S_{N,k}[{\mathbf g}] &= \frac{\delta_{k+3}}{3}
	\big(2(g_{N+k}+g_k)+g_{k+1}\big), \\
	\pa_k S_{N,k-1}[{\mathbf g}] &= \frac23\delta_k(g_k-g_{k-1}-g_{N+k-1}), \\
	\pa_{N+k} S_{N,k}[{\mathbf g}] &= \delta_{k+1}\bigg((g_{k+1}+g_k)
	+ \frac85 g_{N+k}\bigg).
\end{align*}
As $S_{N,k}$ and $S_{N,k-1}$ depend on $g_k$, we obtain (recall \eqref{disc.SN})
$$
  \pa_k S_N = \pa_k S_{N,k} + \pa_k S_{N,k-1}, \quad
	\pa_{N+k} S_N = \pa_{N+k} S_{N,N+k}.
$$
The second-order derivatives become
\begin{align*}
  \pa_{k,k-1}S_{N,k-1} &= -\frac23\delta_k, &
	\pa_{k,k}S_{N,k-1} &= \frac23\delta_k, &
	\pa_{k,k}S_{N,k} &= -\frac23\delta_{k+1}, \\
	\pa_{k,k+1}S_{N,k} &= \frac13\delta_{k+1}, &
	\pa_{k,N+k-1}S_{N,k-1} &= -\frac23\delta_k, &
  \pa_{k,N+k}S_{N,k} &= \frac23\delta_{k+1}, \\
	\pa_{N+k,k+1}S_{N,k} &= \frac23\delta_{k+1}, &
	\pa_{N+k,N+k}S_{N,k} &= \frac{16}{15}\delta_{k+1}. & &
\end{align*}
Then the elements of the Hessian of $S_N$ read as
\begin{align*}
  \pa_{k,k-1}S_N &= \pa_{k,k-1}S_{N,k-1}, &
	\pa_{k,k}S_N &= \pa_{k,k}S_{N,k} + \pa_{k,k}S_{N,k-1}, \\
	\pa_{k,k+1}S_N &= \pa_{k,k+1}S_{N,k}, &
	\pa_{k,N+k}S_N &= \pa_{k,N+k}S_{N,k}, \\
	\pa_{k,N+k-1}S_N &= \pa_{k,N+k-1}S_{N,k-1}, &
	\pa_{N+k,k+1}S_N &= \pa_{N+k,k+1}S_{N,k}, \\
	\pa_{N+k,N+k}S_N &= \pa_{N+k,N+k}S_{N,k}.
\end{align*}
\end{appendix}


\end{document}